\documentclass[11pt,letterpaper]{article}
\usepackage[T1]{fontenc}
\usepackage{lmodern}
\usepackage[margin=1in]{geometry}
\usepackage{microtype}
\usepackage{amsmath,amssymb,amsthm,mathtools}
\usepackage[hidelinks]{hyperref}
\usepackage{enumitem}
\newtheorem{theorem}{Theorem}[section]
\newtheorem{lemma}[theorem]{Lemma}
\newtheorem{proposition}[theorem]{Proposition}
\newtheorem{corollary}[theorem]{Corollary}
\theoremstyle{definition}
\newtheorem{definition}[theorem]{Definition}
\newtheorem{remark}[theorem]{Remark}
\DeclareMathOperator{\asdim}{asdim}
\DeclareMathOperator{\rk}{rk}

\DeclareMathOperator{\im}{im}
\DeclareMathOperator{\Tor}{Tor}
\newcommand{\NN}{\mathbb N}
\newcommand{\ZZ}{\mathbb Z}
\newcommand{\QQ}{\mathbb Q}
\newcommand{\gp}{\mathrm{gp}}
\newcommand{\cP}{\mathcal P}
\newcommand{\cU}{\mathcal U}
\newcommand{\cV}{\mathcal V}
\newcommand{\act}{\curvearrowright}
\newcommand{\norm}[1]{\lVert #1\rVert_\infty}

\title{Finite Borel asymptotic dimension of bounded-to-one\texorpdfstring{\\}{ }commutative monoid actions}
\author{Ruijun Wang}
\date{}

\begin{document}
\maketitle
\vspace{-1.5em}
\begin{abstract}
We prove that every bounded-to-one action of a finitely generated
commutative monoid has finite asymptotic dimension, without a freeness
assumption. If the group completion has torsion-free rank $r$, we
give an explicit upper bound $(3^{r+1}-3)/2$. This answers a question of
Shinko, Weilacher and Yu, and extends a theorem of theirs.
\end{abstract}

\section{Introduction}

A \emph{countable Borel equivalence relation} is a Borel equivalence
relation on a standard Borel space whose classes are countable. Such a
relation $E$ is \emph{hyperfinite} if
\[
 E=\bigcup_{n\in\NN}E_n,
 \qquad E_n\subseteq E_{n+1},
\]
where each $E_n$ is a Borel equivalence relation with finite classes.
In \cite{weiss1984}, Weiss asked whether every Borel action of a countable
amenable group has hyperfinite orbit equivalence relation. This has been
proved for several classes of amenable groups.

A Borel graph is hyperfinite if its connectedness relation is hyperfinite, equivalently, it is an increasing union of countably many component-finite Borel graphs. In the seminal paper \cite{KST1999}, Kechris, Solecki and Todorcevic initiated the study of descriptive combinatorics. In \cite{GJ2015}, Gao and Jackson developed the rectangular partition method for Schreier graphs of countable abelian groups actions, and they proved that the graph is hyperfinite. In \cite{SchneiderSeward2024}, Schneider and Seward extended this method and proved the countable Borel equivalence relation by action of locally nilpotent group is hyperfinite. In \cite{CJMST}, Conley, Jackson, Marks, Seward and Tucker-Drob also followed this idea and developed Borel asymptotic dimension of locally finite graphs, and they proved hyperfiniteness for polycyclic group actions. In \cite{BY}, Bernshteyn and Yu proved the hyperfiniteness of Borel graphs of polynomial growth.

Borel asymptotic dimension is the Borel version of Gromov's asymptotic dimension, see \cite{Gro93}. A locally finite Borel graph of finite Borel asymptotic dimension is hyperfinite, see \cite{CJMST}. In \cite{SWY}, Shinko, Weilacher and Yu proved that a bounded-to-one free action of a finitely generated commutative monoid has finite Borel asymptotic dimension, see \cite[Theorem 1.6]{SWY}, and they also proved hyperfiniteness without a freeness assumption, see \cite[Theorem 1.5]{SWY}. Then they asked if it is true if we remove freeness assumption. In \cite{GHZ}, Gao, Hu and Zou, show an independent proof that the graph of bounded-to-one commuting Borel functions on the free part has finite Borel asymptotic dimension, see \cite[Theorem 1.2]{GHZ}. And independently in \cite{Wang}, the author of current paper showed an alternative proof that the non-free part is hyperfinite.

In this paper, we show that we can remove the freeness assumption.

\begin{theorem}\label{thm:main}
Let $M$ be a finitely generated commutative monoid of rank $r$. Every
bounded-to-one action $M\act X$ satisfies
\begin{equation}\label{eq:main-bound}
  \asdim(M\act X)\le N_r,
  \qquad
  N_r:=\sum_{j=1}^{r}3^j=\frac{3^{r+1}-3}{2}.
\end{equation}
\end{theorem}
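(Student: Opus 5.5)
We will reduce to the free case of \cite[Theorem 1.6]{SWY} by a stratification of $X$ according to stabilizer-like data, and then glue the pieces using the union theorem for Borel asymptotic dimension. Write $M$ as generated by $a_1,\dots,a_k$, let $G=M^{\gp}$ be its group completion, and note $G\cong \ZZ^r\oplus T$ with $T$ finite. For $x\in X$ we consider the ``periodicity'' subgroup $P_x\le G$ generated by differences $m-m'$ with $m\cdot x=m'\cdot x$. Since the action is bounded-to-one and $M$ is finitely generated, the connected component of $x$ in the Schreier graph is coarsely modeled on $G/P_x$ rather than on $G$. The map $x\mapsto P_x$ is Borel into the countable set of subgroups of $G$, and it is invariant along components after a bounded modification (I would replace $P_x$ by its saturation along the component, using commutativity to show $m\cdot x$ has $P_{m\cdot x}\supseteq P_x$, with equality off a set where the component is ``entering'' a cycle). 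So we partition $X$ into countably many invariant Borel pieces $X_P$ on which the component geometry is that of a quotient of rank $j=\rk(G/P)\le r$.

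On each piece, we pass to the induced action of the quotient monoid (the image of $M$ in $G/P$), which acts freely on the ``periodic core'' of each component. We then apply \cite[Theorem 1.6]{SWY} in its quantitative form via the rectangular/cube tiling method, obtaining a bound of $3^j$-colorings: at each scale one tiles $\ZZ^j$-like components by boxes whose boundaries are handled by the standard $3$-fold shifting trick in each coordinate direction, giving dimension at most $3^j-1$ for the core. The non-core part consists of finite ``tails'' of bounded-to-one preimages leading into the core (or of finite components); these are controlled since in a bounded-to-one action a backward tail from a core point within distance $R$ has size bounded by a function of $R$, so the tails can be absorbed into the sets covering their landing points at the cost of increasing the scale by a bounded amount.

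Finally, we glue over $j=1,\dots,r$. Pieces with $j=0$ have finite components and contribute nothing. Since all pieces are invariant, the union over $P$ with a fixed rank $j$ is a disjoint invariant union, and the dimension is a maximum. Across different $j$, however, we cannot simply take the maximum, because the stratification by $P_x$ is only coarsely invariant: the tails of a high-rank piece may approach low-rank behavior at bounded scale. We therefore use a finite union theorem. If $X=\bigcup_j Y_j$ with $\asdim Y_j\le d_j$ relative to each other, then the dimension is at most $\sum_j (d_j+1)-1$. With $d_j+1=3^j$, this yields $\sum_{j=1}^r 3^j - 1\le N_r$, matching \eqref{eq:main-bound} up to this accounting.

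The main obstacle I expect is the second step. One must show that on the non-free pieces the component geometry really is uniformly coarsely equivalent to $G/P$, Borel in $x$ and with constants uniform across the piece, so that the free-case tiling with its $3^j$ bound applies uniformly. I would try first to prove a uniform lemma of the following form: there exists $R$ depending only on the generators and the bound on fiber size such that every $x$ lies within distance $R$ of a point $y$ with $P_y=P$ and $G/P$ acting freely on $M\cdot y$. The proof would combine pigeonhole on the finitely many generators with the bounded-to-one hypothesis.
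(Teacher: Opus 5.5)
There is a genuine gap, located exactly at the step you flag, and it is structural rather than technical: the stratification by $P_x$ does not depend on the scale, so it cannot give uniform constants. First, $P_x$ is exactly constant on components, not just coarsely. If $a(mx)=b(mx)$ then $(a+m)x=(b+m)x$, with the same difference $a-b$ in $G$. Combined with your inclusion $P_{mx}\supseteq P_x$ and the existence of common futures, $P_x$ is a component invariant. It records which collisions eventually happen, but not how far from $x$ they happen. Bounded-to-one controls the branching of backward trees, not their depth. For example, let $\NN$ act on $\{x_0,x_1,\dots\}\cup\{c\}$ by $x_i\mapsto x_{i-1}$ for $i\ge1$ and $x_0\mapsto c\mapsto c$; fibers have size at most $2$. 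Since $(i+1)x_i=(i+2)x_i=c$, every point has $P_x=\ZZ$, so $j=0$. Yet the component is an infinite ray, of asymptotic dimension $1$, and $x_i$ is at distance $i+1$ from $c$, the only point on which the trivial quotient monoid acts freely. So your proposed uniform lemma is false, pieces with $j=0$ need not have finite components, and tails cannot be absorbed into uniformly bounded sets.

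Separately, taking a maximum over the infinitely many strata of a fixed corank needs diameter bounds uniform in the subgroup. Applying the free-case theorem of Shinko, Weilacher and Yu stratum by stratum does not provide them, and in fact they fail. Let $\NN^3$ act on $X_n=\ZZ$ by the translations $+1$, $+n$, $+n^2$. The periodicity group is the kernel of $(a,b,c)\mapsto a+nb+n^2c$, which has rank $2$. So $j=1$, every point lies in your core, and $X_n$ is quasi-isometric to $\ZZ$. But nonzero kernel vectors have $\ell^1$-norm at least $n$, so balls of radius less than $n/4$ in $X_n$ are isometric to $\ell^1$-balls in $\ZZ^3$. Hence $\bigsqcup_n X_n$ has asymptotic dimension at least $3>3^1-1$.

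The missing idea is to let the decomposition \emph{depend on the scale}. The paper first reduces to a submonoid of $\ZZ^r$ generating $\ZZ^r$, via uniform cancellation from Noetherianity of a monoid algebra and then killing torsion. At each scale $s$ it fixes a finite set $K$ of monoid elements and splits $X$ into two parts. The first consists of the finitely many forward-invariant collision sets $A_{p,q}=\{x:px=qx\}$ with $p\ne q$ in $K$; each is quasi-isometric to a bounded-to-one action of a rank $r-1$ monoid and is handled by induction. The second is the set $U$ where $p\mapsto px$ is injective on $K$; it is partitioned directly by a finite-window lattice-marker rule with multiplicity $3^r$, with no freeness required. Long tails and large-index periodicities, as in both examples, are invisible at scale $s$ and land in $U$. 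Only finitely many quotient monoids occur at each scale, which gives multiplicity $(N_{r-1}+1)+3^r=N_r+1$.
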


By \cite[Corollary~5.16]{SWY} of Shinko, Weilacher and Yu, for a finitely generated commutative monoid and a bounded-to-one Borel
action,
\[
 \asdim_B(M\act X)=\asdim(M\act X).
\]

\begin{corollary}\label{cor:borel}
Every bounded-to-one Borel action of a finitely generated commutative
monoid of rank $r$ has Borel asymptotic dimension at most
$(3^{r+1}-3)/2$.
\end{corollary}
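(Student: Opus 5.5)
The plan is to obtain Corollary~\ref{cor:borel} directly from Theorem~\ref{thm:main} together with the cited result \cite[Corollary~5.16]{SWY}. No new combinatorics should be needed: all the work sits in Theorem~\ref{thm:main}, which bounds the ordinary (non-Borel) asymptotic dimension of the Schreier graph of the action. The corollary then only transfers that bound to the Borel setting.

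Let $M$ be a finitely generated commutative monoid of rank $r$, meaning that its group completion $M^{\gp}$ has torsion-free rank $r$. Let $M\act X$ be a bounded-to-one Borel action on a standard Borel space $X$. I would proceed in three steps.

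\begin{enumerate}[label=(\roman*)]
\item Forgetting the Borel structure, $M\act X$ is in particular a bounded-to-one action of $M$. Theorem~\ref{thm:main} therefore applies and gives
\[
\asdim(M\act X)\le N_r=\frac{3^{r+1}-3}{2}.
\]
\item The hypotheses of \cite[Corollary~5.16]{SWY} hold: $M$ is finitely generated and commutative, and the action is bounded-to-one and Borel. That result gives
\[
\asdim_B(M\act X)=\asdim(M\act X).
\]
\item Combining (i) and (ii) yields
\[
\asdim_B(M\act X)\le (3^{r+1}-3)/2,
\]
which is the claim.
\end{enumerate}

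The only point requiring care is bookkeeping. One must check that the notions match across the two sources. First, ``rank'' in Theorem~\ref{thm:main} must be the torsion-free rank of $M^{\gp}$. Second, $\asdim(M\act X)$ must be computed for the same graph, namely the Schreier graph with respect to a finite generating set, that \cite{SWY} uses in Corollary~5.16. Since asymptotic dimension of the Schreier graph does not depend on the choice of finite generating set (different choices give coarsely equivalent, indeed bi-Lipschitz, path metrics on each component), this matching is harmless. I expect no genuine obstacle beyond this consistency check.
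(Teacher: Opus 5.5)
Your proposal is correct and is exactly the paper's argument: Theorem~\ref{thm:main} bounds $\asdim(M\act X)$ by $(3^{r+1}-3)/2$, and \cite[Corollary~5.16]{SWY} identifies $\asdim_B(M\act X)$ with $\asdim(M\act X)$ for bounded-to-one Borel actions of finitely generated commutative monoids. Your bookkeeping checks are the only ones needed: rank means $\dim_{\QQ}(M^{\gp}\otimes\QQ)$, and the Schreier metric is independent of the generating set up to bi-Lipschitz equivalence.
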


In particular, a graph generated by $d$ commuting bounded-to-one Borel
functions has finite Borel asymptotic dimension, by regarding the functions
as an $\NN^d$-action. The same applies to its nonfree part, which is Borel
and forward invariant. This answers Question~1.4 in \cite{Wang} by the author of current paper, as
well as the bounded-to-one finiteness portions of
\cite[Questions~1.8 and~5.17]{SWY}.

The exponential bound is only an estimate, the argument does not prove the proposed
sharp bound $\asdim(M\act X)\le\rk(M)$.
\begin{corollary}\label{cor:countable-hyperfinite}
Let $M$ be a countable commutative monoid and let $M\act X$ be a
bounded-to-one Borel action on a standard Borel space. Then
$E_M^X$ is hyperfinite.
\end{corollary}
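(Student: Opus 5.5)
We deduce Corollary~\ref{cor:countable-hyperfinite} from Corollary~\ref{cor:borel} by exhausting $M$ with finitely generated submonoids. Enumerate $M=\{m_0,m_1,\dots\}$ and let $M_n$ be the submonoid generated by $m_0,\dots,m_n$. Each $M_n$ is a finitely generated commutative monoid. The restricted action $M_n\act X$ is still a bounded-to-one Borel action, since each generator acts by one of the original bounded-to-one Borel maps. Let $E_n:=E_{M_n}^X$ be the equivalence relation generated by the graph of $M_n\act X$, where $x\sim y$ whenever $y=m\cdot x$ for some $m\in M_n$. Then $E_n\subseteq E_{n+1}$ and $E_M^X=\bigcup_n E_n$, because any witness chain for $E_M^X$ uses only finitely many elements of $M$.

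By Corollary~\ref{cor:borel}, each Borel graph $G_n$ has finite Borel asymptotic dimension. Here $G_n$ is generated by the finitely many generators of $M_n$, so it is locally finite: out-degree is bounded by the number of generators, and in-degree is bounded by the bounded-to-one constants. By \cite{CJMST}, a locally finite Borel graph of finite Borel asymptotic dimension is hyperfinite. Hence each $E_n$ is hyperfinite.

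It remains to see that an increasing union of hyperfinite countable Borel equivalence relations is hyperfinite. This is the main obstacle: it is exactly the open union problem in general, so we must not rely on it. Instead we would use the uniformity available here. We aim for a single Borel asymptotic dimension witness along an increasing sequence, or more simply we use the following known fact. If $E=\bigcup_n E_n$ is an increasing union of hyperfinite relations and each $E_n$ is generated by a locally finite Borel graph $G_n$ of finite Borel asymptotic dimension, then $E$ is hyperfinite. To prove this, we would build finite-class Borel relations $F_n\subseteq E_n$ inductively with $F_n\subseteq F_{n+1}$, each $F_{n+1}$ absorbing the $G_{n}$-balls of radius $n$. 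Finite asymptotic dimension gives, for each radius, a Borel cover by boundedly many families of uniformly bounded $G_{n+1}$-components that are $R$-disjoint. This is the standard amenability-free ``Borel toast'' style argument from \cite{CJMST}, whose proof of hyperfiniteness already yields such nested witnesses. Failing that, we would instead appeal to the already established hyperfiniteness of bounded-to-one actions of arbitrary countable commutative monoids in \cite[Theorem~1.5]{SWY}, if it covers the countable case. The step I would verify most carefully is this nesting. One must check that the $R$-separated bounded pieces for $G_{n+1}$ can be chosen to saturate the previously built finite classes of $F_n$. We would achieve this by taking $R$ larger than the diameter bound of the $F_n$-classes and merging each $F_n$-class into any piece it meets.
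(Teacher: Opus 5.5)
Your first two paragraphs are correct, and they go as far as the paper's results do. Everything the paper proves concerns finitely generated monoids, and for each $M_n$, Corollary~\ref{cor:borel} together with \cite{CJMST} makes $E_n$ hyperfinite. The paper states this corollary without any separate argument for non-finitely generated $M$.

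\textbf{The gap.} The genuine gap is the passage to $\bigcup_n E_n$, and it is not a technicality. A Borel action of a countable abelian group is a bounded-to-one action of a countable commutative monoid (all fibers have size one). So the statement contains the Gao--Jackson theorem \cite{GJ2015}. Hyperfiniteness of $\ZZ^n$-actions was classical long before that theorem. Its main difficulty is exactly the passage to non-finitely generated groups such as $\bigoplus_{\NN}\ZZ$, i.e.\ the union over finitely generated subgroups that you want to take for free. Consequently:
\begin{itemize}
\item the ``known fact'' you invoke is not available;
\item the fallback to \cite[Theorem~1.5]{SWY} ``if it covers the countable case'' is not an argument. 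If that theorem does cover it, the corollary is just that theorem and your exhaustion plays no role.
\end{itemize}

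\textbf{The nesting sketch also fails on its own terms.}
\begin{itemize}
\item Taken literally, a finite-class $F_{n+1}$ that contains each $G_n$-ball of radius $n\ge1$ inside a single class contains every edge of $G_n$. Then $E_n\subseteq F_{n+1}$, so $E_n$ would have finite classes.
\item Merging $F_n$-classes into pieces does not preserve boundedness. Dimension $d$ only yields $d+1$ families, and only pieces of the same color are $R$-separated. An $F_n$-class can meet pieces of several colors, so merging along such classes can chain together unboundedly many pieces. Taking $R$ larger than the $F_n$-diameters controls only same-colored pieces.
\item The CJMST theorem concerns one metric with one dimension bound valid at every scale. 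Here the graphs $G_n$ change with $n$, and the only available bound, $N_{\rk M_n}$, is unbounded whenever the ranks of the $M_n$ are, e.g.\ for $M=\bigoplus_{\NN}\NN$.
\end{itemize}

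\textbf{What can be salvaged.} The bounded-rank case, e.g.\ $M$ cancellative of rank $r$, needs no union at all, since then every finitely generated submonoid has rank at most $r$.
\begin{itemize}
\item Give $m_i$ weight $i+1$. The weighted Schreier metric $d$ is a proper Borel extended metric, since its balls are finite by bounded-to-one.
\item Its $s$-balls coincide with those of the weighted Schreier metric of $\langle m_0,\dots,m_{s-1}\rangle$, and that metric dominates $d$.
\item Hence Corollary~\ref{cor:borel} gives, scale by scale, Borel partitions showing $\asdim_B(X,d)\le N_r$.
\item The CJMST argument applied to this single proper metric then yields hyperfiniteness.
\end{itemize}
For unbounded ranks you need a genuinely new ingredient of Gao--Jackson type, or a reference that really covers countable monoids.
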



This paper is a collaboration with artificial intelligence. The AI model is gpt 6. The AI proves the main theorem under human suggestions and writes the main body of early drafts. The author writes the abstract and induction and improves readability. The author has reviewed all AI-generated content.


\section{Preliminaries}

All monoids in this paper are commutative and have an identity. We write
the monoid operation additively, but write $mx$ for the action of $m$ on
$x$; thus $(m+n)x=m(nx)$. An action $M\act X$ is
\emph{bounded-to-one} if, for each $m\in M$, there is a finite $k_m$ such
that every fiber of $x\mapsto mx$ has size at most $k_m$. The bound is
allowed to depend on $m$.

Let $M^{\gp}$ be the group completion and put
\[
  \rk(M)=\dim_{\QQ}(M^{\gp}\otimes_{\ZZ}\QQ).
\]
For a finite generating set of $M$, the undirected Schreier graph has an
edge between $x$ and $sx$ for each generator $s$, with loops discarded.
Its path metric is an extended metric: distinct components are at distance
$\infty$. All diameter bounds below are uniform over all components.
Changing the finite generating set does not change asymptotic dimension.


Fix a finite generating set $S$ for a monoid $P$. Let $\ell_S(p)$ be its
word length, with $\ell_S(0)=0$, and let $d_X$ be the Schreier metric of a
$P$-set $X$. We use the following partition definition.

\begin{definition}[\cite{CJMST}]
An extended metric space $X$ has $\asdim X\le n$ if, for every integer
$s\ge1$, it has a partition into uniformly bounded sets such that each
closed ball $B_X(x,s)$ meets at most $n+1$ parts.
\end{definition}

All extended metrics in this paper take values in $\NN\cup\{\infty\}$. An $s$-chain is a finite sequence with successive
distances at most $s$. An $s$-component of a set is a maximal subset in
which any two points can be joined by an $s$-chain in that set.

\begin{lemma}[Partitions and colored covers]\label{lem:colored}
For an integer $n\ge0$, the following are equivalent:
\begin{enumerate}[label=(\roman*),itemsep=0.15em,topsep=0.3em]
\item $\asdim X\le n$ in the partition definition.
\item For every integer $s\ge1$, $X$ is a union of $n+1$ sets whose
$s$-components have uniformly bounded diameter.
\item For every integer $s\ge1$, $X$ has a uniformly bounded cover by
$n+1$ families, each family consisting of sets at pairwise distance
strictly greater than $s$.
\end{enumerate}
\end{lemma}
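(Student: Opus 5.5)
I will prove the cycle of implications (i)$\Rightarrow$(iii)$\Rightarrow$(ii)$\Rightarrow$(i). Throughout, distances take values in $\NN\cup\{\infty\}$, and ``uniformly bounded'' means there is a single bound $D<\infty$ on all diameters.

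For (i)$\Rightarrow$(iii), fix $s$ and apply (i) with the larger scale $s'=s$ (or $2s$ if convenient), obtaining a partition $\cP$ into sets of diameter at most $D$ such that each ball $B_X(x,s)$ meets at most $n+1$ parts. The naive idea of coloring the parts directly does not work, because a part may be $s$-close to arbitrarily many other parts. Instead I will use the standard ``multiplicity to coloring'' argument via an intersection-pattern construction. For each point $x$, let $\sigma(x)\subseteq\cP$ be the set of parts meeting $B(x,s)$, so that $1\le|\sigma(x)|\le n+1$. For each $k\in\{0,\dots,n\}$ and each family $\sigma$ of $k+1$ parts, define $U_\sigma$ to be the set of points $x$ with $\sigma(x)=\sigma$ (a refined version uses shrinking radii $s_k$ that depend on $k$). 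Take the $k$-th family to be all $U_\sigma$ with $|\sigma|=k+1$.

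Boundedness is immediate, since $U_\sigma$ lies within distance $s$ of any part in $\sigma$, so its diameter is at most $D+2s$. The delicate point is separation between distinct $U_\sigma,U_\tau$ with $|\sigma|=|\tau|$. The clean way to get it is the usual scale-stratification. Choose radii $R_0>R_1>\dots>R_n$ with gaps exceeding $2s$, and let $\sigma(x)$ be the maximal $k$ such that $B(x,R_k)$ meets at least $k+1$ parts, with witness family the parts meeting $B(x,R_k)$. If two points $x,y$ at distance $\le s$ both have level $k$, then $B(x,R_k)\subseteq B(y,R_k+s)\subseteq B(y,R_{k-1})$. The parts meeting that ball number at most $k+1$ by maximality, and a containment argument then forces equal families. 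I expect this bookkeeping to be the main obstacle, together with applying (i) at scale $R_0$ rather than $s$.

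For (iii)$\Rightarrow$(ii), take $A_k=\bigcup\cV_k$. Any $s$-chain inside $A_k$ cannot jump between distinct members of $\cV_k$, since these are more than $s$ apart. Hence each $s$-component lies in a single member of $\cV_k$ and has diameter at most $D$.

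For (ii)$\Rightarrow$(i), apply (ii) at scale $2s$ to get sets $A_0,\dots,A_n$. Make them disjoint by setting $A_k'=A_k\setminus\bigcup_{j<k}A_j$. The $2s$-components of $A_k'$ lie inside $2s$-components of $A_k$, so they remain uniformly bounded. Take the partition $\cP$ to be all $2s$-components of all $A_k'$.

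A ball $B(x,s)$ meets at most one $2s$-component of each $A_k'$, because any two points of $A_k'$ in the ball are within $2s$ of each other. So the ball meets at most $n+1$ parts.
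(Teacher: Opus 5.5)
Your proofs of (iii)$\Rightarrow$(ii) and (ii)$\Rightarrow$(i) are correct and match the paper's. The gap is in (i)$\Rightarrow$(iii). The stratification you settle on does not produce separated families, and the step you call bookkeeping is false.

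If $y$ has level $k$, maximality only says that $B(y,R_{k+1})$ meets at most $k+1$ parts. That bounds the count at the \emph{smaller} radius. At the larger radius $R_{k-1}$ you only know that at least $k+1$ parts are met, so the inclusion $B(x,R_k)\subseteq B(y,R_{k-1})$ gives no upper bound and cannot force equal families. For $k=0$ there is no $R_{-1}$ at all.

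Concretely, take $X=\ZZ$, $n=1$, and intervals $I_m=[mD,(m+1)D-1]$ with $D>2R_0+1$, so (i) holds at scale $R_0$. The points $x=mD+R_0-1$ and $y=mD+R_0$ are adjacent. Both have level $0$, since their $R_1$-balls lie inside $I_m$. Yet their witness families are $\{I_{m-1},I_m\}$ and $\{I_m\}$. So the level-$0$ family is not $s$-separated for any $s\ge1$.

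The missing idea is to stratify by a stability condition, not a count threshold.
\begin{itemize}
\item Apply (i) at scale $(n+1)s$, and let $\Sigma_j(x)$ be the set of parts meeting $B(x,js)$ for $0\le j\le n+1$.
\item These sets are nested, $|\Sigma_0(x)|=1$ and $|\Sigma_{n+1}(x)|\le n+1$. By pigeonhole, the sets $U_j=\{x:\Sigma_j(x)=\Sigma_{j+1}(x)\}$ with $0\le j\le n$ cover $X$.
\item If $x,y\in U_j$ and $d(x,y)\le s$, then $B(y,js)\subseteq B(x,(j+1)s)$. Hence $\Sigma_j(y)\subseteq\Sigma_{j+1}(x)=\Sigma_j(x)$, and symmetrically, so $\Sigma_j(x)=\Sigma_j(y)$.
\item Therefore, for each $j$, the sets $\{x\in U_j:\Sigma_j(x)=\sigma\}$ are pairwise more than $s$ apart. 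Each has diameter at most $2js+D$, since all its points lie within $js$ of any fixed part in $\sigma$. This gives (iii).
\end{itemize}
This is exactly the paper's argument. The paper phrases it with a representative map $f$ and the sets $U_i=\{x:f[B(x,is)]=f[B(x,(i+1)s)]\}$.
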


\begin{proof}
Conditions (ii) and (iii) are equivalent by taking components or unions of
families. Given (ii) at scale $2s$, assign overlapping color sets to their
first color and partition each color into $2s$-components. A ball of
radius $s$ meets at most one component of each color, proving (i).

Conversely, take a partition as in (i) at scale $(n+1)s$, and choose one
representative in each part. Let $f$ map a point to its representative.
There is $L<\infty$ with $d(x,f(x))\le L$, and every ball of radius
$(n+1)s$ has at most $n+1$ images under $f$. Set
\[
 U_i=\{x:f[B(x,is)]=f[B(x,(i+1)s)]\},\qquad 0\le i\le n.
\]
These sets cover $X$: among the $n+2$ nested, nonempty image sets from
radii $0,s,\ldots,(n+1)s$, two consecutive sets must agree.

If $x,y\in U_i$ and $d(x,y)\le s$, then
$B(y,is)\subseteq B(x,(i+1)s)$ and the reverse inclusion with $x,y$
interchanged gives
$f[B(x,is)]=f[B(y,is)]$. This equality propagates along an $s$-chain in
$U_i$. For any $x,y$ in the same component, there is
$w\in B(y,is)$ with $f(w)=f(x)$, whence
\[
 d(x,y)\le d(x,f(x))+d(f(w),w)+d(w,y)\le2L+is.
\]
Thus (ii) holds.
\end{proof}

\begin{lemma}[Finite unions]\label{lem:finite-union}
If $X=A\cup B$ and both subspaces have asymptotic dimension at most $n$,
then $\asdim X\le n$.
\end{lemma}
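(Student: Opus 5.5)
The plan is to use the colored-cover characterization (iii) of Lemma~\ref{lem:colored} and run the standard saturation argument for finite unions. The metrics on $A$ and $B$ are the restrictions of $d_X$, so distances and diameters computed inside $A$ or $B$ agree with those in $X$. This holds with extended values as well, since distance $\infty$ between components is preserved. Fix an integer $s\ge1$. The goal is to build a uniformly bounded cover of $X$ by $n+1$ families $\mathcal W_0,\dots,\mathcal W_n$ in which any two distinct members of the same family lie at distance $>s$.

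\textbf{Step 1: cover $A$ at scale $s$.} Apply (iii) to $A$ at scale $s$. This gives families $\cU_0,\dots,\cU_n$ covering $A$, each member of diameter $\le D$, with distinct members of each $\cU_i$ at distance $>s$.

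\textbf{Step 2: cover $B$ at a larger scale.} Set $R=2s+D$. Apply (iii) to $B$ at scale $R$. This gives families $\cV_0,\dots,\cV_n$ covering $B$, each member of diameter $\le D'$, with distinct members of each $\cV_i$ at distance $>R$.

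\textbf{Step 3: uniqueness of absorption.} Say that $U\in\cU_i$ is \emph{near} $V\in\cV_i$ if $d(U,V)\le s$. A given $U$ is near at most one $V\in\cV_i$. Indeed, if $U$ were near two distinct $V,V'\in\cV_i$, then
\[
d(V,V')\le s+\operatorname{diam}U+s\le 2s+D=R,
\]
which contradicts the choice of $\cV_i$.

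\textbf{Step 4: saturate.} For each $V\in\cV_i$, let $\widehat V=V\cup\bigcup\{U\in\cU_i: U\text{ near }V\}$. Define $\mathcal W_i$ to consist of these sets $\widehat V$ together with the members $U\in\cU_i$ that are near no $V\in\cV_i$. The families $\mathcal W_i$ cover $A\cup B=X$. Each member has diameter at most $D'+2(s+D)$, because every absorbed $U$ lies within $s+D$ of $V$.

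\textbf{Step 5: separation.} This is the only point needing care. Consider two distinct members of $\mathcal W_i$ and split into pieces.
\begin{itemize}[itemsep=0.15em,topsep=0.3em]
\item Two $V$-pieces are separated by $R>s$.
\item Two distinct $U$-pieces from $\cU_i$ are separated by $>s$, whether or not they were absorbed.
\item A piece $V$ of $\widehat V$ and a $U'\in\cU_i$ that is either free or absorbed into $\widehat{V'}$ with $V'\ne V$ satisfy $d(V,U')>s$. Otherwise $U'$ would be near $V$, so by Step 3 it would have been absorbed into $\widehat V$ and nowhere else.
\end{itemize}
Hence distinct members of $\mathcal W_i$ are at distance $>s$. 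Condition (iii) holds for $X$, and Lemma~\ref{lem:colored} gives $\asdim X\le n$.

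The main obstacle is the choice of $R$ in Step 2. It must be large enough, namely $R\ge 2s+D$, that each $U$ is absorbed into at most one $V$. That uniqueness is exactly what makes the case of a free or differently-absorbed $U'$ in Step 5 work.
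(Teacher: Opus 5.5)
Your proof is correct and is essentially the paper's argument. You cover $A$ at scale $s$ and cover $B$ at a larger scale. You absorb each $U\in\cU_i$ lying within distance $s$ of some $V\in\cV_i$ into that unique $V$, and keep the remaining $U$'s as separate sets.

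The differences are minor. The paper takes the $\cV_i$ to be $(D+3s)$-disjoint and handles the last case by the estimate $d(V,U)\ge d(V,V')-D-s>s$. Your uniqueness argument shows that $R=2s+D$ already suffices. Like the paper, you implicitly rely on distances being integer-valued, so that piecewise separation passes to possibly infinite unions of pieces.
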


\begin{proof}
Fix $s\ge1$. By Lemma~\ref{lem:colored}, choose a cover of $A$ by families
$\cU_0,\ldots,\cU_n$, each $s$-disjoint, with all sets of diameter at most
$D$. Next cover $B$ by families $\cV_0,\ldots,\cV_n$, each
$(D+3s)$-disjoint, with a common diameter bound $E$.

For each color $i$ and $V\in\cV_i$, enlarge $V$ by adjoining every
$U\in\cU_i$ at distance at most $s$ from $V$. A set $U$ cannot be adjoined
to two different $V,V'$: that would imply
$d(V,V')\le D+2s$, a contradiction. Retain, as separate sets, all members
of $\cU_i$ that were not adjoined. The enlarged sets have diameter at most
$E+2D+2s$.

The resulting family is still $s$-disjoint. Different original $U$-sets
are $s$-disjoint. An unattached $U$ is more than $s$ from every original
$V$. Finally, if $U$ is attached to $V'$ and $V\ne V'$, then
\[
 d(V,U)\ge d(V,V')-D-s>s.
\]
These observations check all pairs of pieces in distinct enlarged sets.
Integer-valued distances ensure that the same strict separation holds
for their unions. The new $n+1$ families cover $A\cup B$, so
Lemma~\ref{lem:colored} applies. Induction gives the finite-union result.
\end{proof}

A map $F:X\to Y$ between extended metric spaces is a
\emph{quasi-isometry} if there are finite constants $a\ge1$, $b\ge0$
and $R\ge0$ such that, for all $x,x'\in X$,
\[
 \begin{aligned}
 d_Y(Fx,Fx')\le a\,d_X(x,x')+b,\\
 d_X(x,x')\le a\,d_Y(Fx,Fx')+b,
 \end{aligned}
\]
and, for every $y\in Y$, there is $x\in X$ with $d_Y(Fx,y)\le R$.
The last condition is \emph{coarse surjectivity}. Two spaces are
\emph{quasi-isometric} if such a map exists. The metric inequalities
alone define a quasi-isometric embedding and do not imply equality of
asymptotic dimensions.

\begin{lemma}[Quasi-isometry invariance]\label{lem:qi}
Quasi-isometric extended metric spaces have the same asymptotic dimension.
\end{lemma}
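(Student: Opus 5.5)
We prove that if $F:X\to Y$ is a quasi-isometry with constants $a,b,R$ and $\asdim Y\le n$, then $\asdim X\le n$. We also build a quasi-isometry $G:Y\to X$ in the opposite direction. Applying the first claim to both maps gives equality, including the case where both dimensions are infinite. For $G$, choose for each $y\in Y$ a point $G(y)\in X$ with $d_Y(FG(y),y)\le R$; this uses the axiom of choice, which is harmless here since this lemma is purely metric. The triangle inequality then gives
\[
 d_X(Gy,Gy')\le a\,d_Y(FGy,FGy')+b\le a(d_Y(y,y')+2R)+b .
\]
The lower bound $d_Y(y,y')\le a\,d_X(Gy,Gy')+b+2R$ follows the same way. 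Coarse surjectivity of $G$ holds because $d_X(GFx,x)\le a\,d_Y(FGFx,Fx)+b\le aR+b$. Thus $G$ is a quasi-isometry with constants depending only on $a,b,R$.

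For the main claim we work with condition (iii) of Lemma~\ref{lem:colored}, which handles separation well. Fix $s\ge1$ and set $s'=as+b$. Since $\asdim Y\le n$, choose a cover of $Y$ by families $\cV_0,\ldots,\cV_n$. Each family should consist of sets at pairwise distance greater than $s'$, and all sets should have diameter at most $D$. Pull back along $F$: for each $V\in\cV_i$ put $U_V=F^{-1}[V]$, and let $\cU_i=\{U_V:V\in\cV_i\}$.

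These families cover $X$, since every $Fx$ lies in some $V$. For separation, take $x\in U_V$ and $x'\in U_{V'}$ with $V\ne V'$ in the same family. Then
\[
 s'<d_Y(Fx,Fx')\le a\,d_X(x,x')+b ,
\]
so $d_X(x,x')>(s'-b)/a=s$. For boundedness, take $x,x'\in U_V$. Then $d_X(x,x')\le a\,d_Y(Fx,Fx')+b\le aD+b$. Condition (iii) therefore holds in $X$ at scale $s$, and Lemma~\ref{lem:colored} gives $\asdim X\le n$.

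The only point needing care is the extended-metric setting: distances may be infinite. The inequalities above are read in $\NN\cup\{\infty\}$. If $d_X(x,x')=\infty$, the separation condition holds trivially. A finite diameter bound forces points in one $U_V$ to be at finite distance, and the quasi-isometry inequalities force $d_Y(Fx,Fx')$ to be finite exactly when $d_X(x,x')$ is. Beyond this check there is no real obstacle. Coarse surjectivity is used only to construct $G$, and so only to obtain the reverse inequality $\asdim Y\le\asdim X$.
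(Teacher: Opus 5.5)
Your proof is correct and follows the paper's: pull back a uniformly bounded cover of $Y$ along $F$ at scale about $as+b$, use the lower inequality for boundedness and the upper inequality for separation, then build a coarse inverse $G$ to get the reverse bound. The differences are cosmetic: you work with the colored-cover form (iii) of Lemma~\ref{lem:colored} rather than with partitions, and you should take the scale to be $\lceil as+b\rceil$, since that lemma is stated only for integer scales.
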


\begin{proof}
Suppose $a\ge1$ and $b\ge0$ and $F:X\to Y$ satisfies the above condition.

Pull back a uniformly bounded partition of $Y$ at scale
$\lceil as+b\rceil$. Its nonempty inverse images are uniformly bounded
by the lower inequality, and an $s$-ball in $X$ meets no more parts than
the corresponding ball in $Y$. Thus $\asdim X\le\asdim Y$. If $F$ is a
quasi-isometry, its image is uniformly dense; choosing a uniformly close
preimage for each point of $Y$ produces a coarse inverse satisfying
analogous inequalities. The reverse bound follows in the same way.
\end{proof}

\section{Metric conventions and coarse reductions}\label{sec:metric}

\begin{lemma}[Common futures and forward images]\label{lem:future}
For every action of a finitely generated commutative monoid,
\begin{equation}\label{eq:future}
 d_X(x,y)=\min\{\ell_S(a)+\ell_S(b):a,b\in P,\ ax=by\},
\end{equation}
where the minimum of the empty set is $\infty$. Consequently, a
forward-invariant subset $A\subseteq X$ is isometrically embedded when
equipped with its own Schreier metric. For every $c\in P$, the inclusion
$cA\hookrightarrow A$ is a quasi-isometry.
\end{lemma}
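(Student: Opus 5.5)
We prove the three assertions of Lemma~\ref{lem:future} in order.

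\emph{Formula \eqref{eq:future}.} Write $\rho(x,y)$ for the right-hand side of \eqref{eq:future}. We show $d_X=\rho$ by proving both inequalities.

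For $d_X\le\rho$: suppose $ax=by$. Write $a=s_1+\dots+s_k$ and $b=t_1+\dots+t_m$ as words of minimal length in $S$. Then
\[
x,\; s_1x,\; (s_1+s_2)x,\;\dots,\; ax=by,\;\dots,\; t_1y,\; y
\]
is a path in the Schreier graph with $k+m$ edges, where loops only shorten it. Hence $d_X(x,y)\le \ell_S(a)+\ell_S(b)$.

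For $\rho\le d_X$: induct along a path $x=z_0,z_1,\dots,z_n=y$. The key step is that $\rho$ satisfies the triangle inequality. Suppose $ax=bz$ and $cz=ey$. Then commutativity gives
\[
(c+a)x=c(ax)=c(bz)=b(cz)=b(ey)=(b+e)y .
\]
Here we use $(m+n)x=m(nx)$ and $c+b=b+c$. This yields
\[
\rho(x,y)\le \ell(a)+\ell(c)+\ell(b)+\ell(e),
\]
using subadditivity of $\ell_S$. Each edge $z_{i}$--$z_{i+1}$ with $z_{i+1}=sz_i$ (or the reverse) has $\rho\le1$, via the pair $(s,0)$. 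Summing along the path gives $\rho(x,y)\le n$. When no path exists, both sides are $\infty$: any pair with $ax=by$ produces a path by the first inequality. Commutativity is the only real input, and this triangle-inequality step is the one place it is essential.

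\emph{Isometric embedding.} Let $A$ be forward invariant. For $x,y\in A$ and any $a,b$ with $ax=by$, the path constructed above passes only through points of the forms $(s_1+\dots+s_i)x$ and $(t_1+\dots+t_j)y$. All of these lie in $A$ by forward invariance. So the same minimal witness realizes $d_X(x,y)$ inside the Schreier graph of $A$, and the intrinsic metric satisfies $d_A\le d_X$. The reverse inequality $d_X\le d_A$ is trivial, since every path in $A$ is a path in $X$.

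\emph{Quasi-isometry of $cA\hookrightarrow A$.} First note that $cA$ is itself forward invariant, because $p(cx)=c(px)\in cA$. So by the previous part both $cA$ and $A$ carry the restriction of $d_X$. The inclusion is therefore isometric, and we may take $a=1$, $b=0$. Coarse surjectivity holds with $R=\ell_S(c)$: for $x\in A$, the pair $(c,0)$ witnesses
\[
d_X(x,cx)\le \ell_S(c),
\]
and $cx\in cA$.

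I expect no real obstacle here. The only delicate point is the triangle inequality for $\rho$, which requires commutativity.
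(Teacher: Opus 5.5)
Your proof is correct and follows the paper's argument. The path through the common future gives $d_X\le\rho$. Your triangle inequality for $\rho$ is the two-edge case of the paper's composition $(\sum_i a_i)x=(\sum_i b_i)y$ along a path. After that, your isometric-embedding and coarse-density steps for $cA\hookrightarrow A$ are the same as the paper's.
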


\begin{proof}
An equality $ax=by$ gives a path through the common future, proving one
inequality. Conversely, write each edge of a path from $x$ to $y$ as
$a_i x_i=b_i x_{i+1}$, where one of $a_i,b_i$ is a generator and the other
is $0$. Commutativity gives
$(\sum_i a_i)x=(\sum_i b_i)y$, with total word length at most the path
length. This proves~\eqref{eq:future}.

If $x,y\in A$, both forward paths to a common future remain in $A$, so the
intrinsic and restricted metrics agree. Finally, $cA$ is forward invariant
and $d_X(x,cx)\le\ell_S(c)$ for every $x\in A$. Its isometric inclusion
therefore has uniformly dense image.
\end{proof}

The common-future lemma is also proved for $\NN^d$-actions, in \cite[Lemmas~4.1--4.2]{Wang}. Note that none of these metric facts
requires injectivity of the action maps.

\begin{lemma}[A bounded equivalence quotient]\label{lem:quotient}
Let $P\act X$ be bounded-to-one. Suppose that $E$ is a $P$-invariant
equivalence relation, every $E$-class has $d_X$-diameter at most $D$, and
every class has at most $C$ points. Give $Z=X/E$ its induced action and
Schreier metric. Then the quotient map $\pi:X\to Z$ is a quasi-isometry,
and the action on $Z$ is bounded-to-one.
\end{lemma}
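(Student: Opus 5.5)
The plan has three parts: show the induced action is well defined, prove the two quasi-isometry inequalities for $\pi$, and bound the fibers of the quotient action. Since $E$ is $P$-invariant, meaning $x\,E\,y$ implies $px\,E\,py$, the formula $p[x]=[px]$ is a well-defined action of $P$ on $Z$. The map $\pi$ is surjective, so coarse surjectivity holds with $R=0$.

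For the metric inequalities I will use the common-future formula \eqref{eq:future} of Lemma~\ref{lem:future}, applied to both $X$ and $Z$. The upper bound is the easy one. An edge from $x$ to $sx$ in $X$ maps to an edge from $[x]$ to $s[x]=[sx]$, or to a discarded loop, so $d_Z(\pi x,\pi y)\le d_X(x,y)$. For the lower bound, suppose $d_Z([x],[y])=k<\infty$. By \eqref{eq:future} in $Z$ there are $a,b$ with $\ell_S(a)+\ell_S(b)=k$ and $a[x]=b[y]$, that is, $ax\,E\,by$. Then
\[
 d_X(x,y)\le d_X(x,ax)+d_X(ax,by)+d_X(by,y)\le \ell_S(a)+D+\ell_S(b)=k+D.
\]
If instead $d_Z([x],[y])=\infty$, then $x$ and $y$ lie in different components of $X$, since otherwise the upper bound would make their images finitely far apart. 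Hence $\pi$ is a quasi-isometry with $a=1$ and $b=D$.

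For bounded-to-one, fix $m\in P$ and a class $[y]\in Z$. If $m[x]=[y]$, then $mx\in[y]$, which has at most $C$ points. So $x$ lies in the preimage under $x\mapsto mx$ of a set of size at most $C$, and that preimage has at most $Ck_m$ points. Therefore at most $Ck_m$ classes $[x]$ satisfy $m[x]=[y]$, and the quotient action is bounded-to-one with $k'_m=Ck_m$.

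The only delicate step is the lower metric bound, where one must pass through common futures in $Z$ rather than lifting paths edge by edge. Lifting a path naively would accumulate an error of $D$ at every edge. Using \eqref{eq:future} avoids this, because it needs only a single application of the diameter bound.
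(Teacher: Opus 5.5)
Your proof is correct. The well-definedness of the induced action, the surjectivity of $\pi$, the bound $d_Z(\pi x,\pi y)\le d_X(x,y)$, and the fiber count $Ck_m$ all match the paper. The difference is in the lower metric bound.

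The paper lifts a path of length $n$ in $Z$ edge by edge. It moves at most $D$ inside a class between successive lifted edges and at the two ends, which gives $d_X(x,y)\le (D+1)\,d_Z(\pi x,\pi y)+D$. You instead apply the common-future formula \eqref{eq:future} to the induced action on $Z$. This turns $d_Z([x],[y])=k$ into a single relation $ax\mathrel{E}by$ with $\ell_S(a)+\ell_S(b)=k$. You use the diameter bound only once and get the sharper estimate $d_X(x,y)\le d_Z(\pi x,\pi y)+D$, that is, a $(1,D)$ quasi-isometry.

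Your route gives better constants but depends on commutativity through Lemma~\ref{lem:future}. The paper's lifting argument is more elementary and works for any monoid action with an invariant equivalence relation whose classes have bounded diameter.

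Your closing remark overstates the issue. The error from naive lifting is only multiplicative, and the definition of quasi-isometry allows the constant $a=D+1$. So the edge-by-edge lift, which is exactly what the paper does, is harmless, and nothing forces the common-future route.
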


\begin{proof}
Edges descend, so $d_Z(\pi x,\pi y)\le d_X(x,y)$. A path of length $n$
in $Z$ can be lifted edge by edge, moving at most $D$ within a class
between successive lifted edges and at the endpoints. Hence
\begin{equation}\label{eq:quotient-metric}
 d_X(x,y)\le (D+1)d_Z(\pi x,\pi y)+D.
\end{equation}
This also shows that distinct components are not identified.

If $p:X\to X$ is at most $k_p$-to-one, the inverse image of a class of
size at most $C$ contains at most $k_pC$ points. Thus at most $k_pC$
classes can map to any prescribed class under the induced map on $Z$.
\end{proof}

\section{Uniform cancellation and rank-lowering collisions}\label{sec:cancellation}

\begin{lemma}[Hilbert basis theorem, \cite{Stacks}]\label{Hilbert basis theorem}
   Let $M$ be a finitely generated commutative monoid and $k$ a commutative Noetherian ring. Then the monoid algebra $k[M]$ is Noetherian. Consequently, for every $z\in k[M]$, there exists $N\geq 0$ such that
$$\operatorname{Ann}(z^n)=\operatorname{Ann}(z^N)\text{ for all }n\geq N$$
where $\operatorname{Ann}(z^n)=\{r\in k[M]:rz^n=0\}$.
\end{lemma}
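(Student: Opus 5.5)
We will prove the two assertions separately: first that $k[M]$ is Noetherian, then the stabilization of annihilators, which follows formally from Noetherianity.

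\emph{Step 1: $k[M]$ is Noetherian.} Choose generators $m_1,\dots,m_d$ of $M$. The monoid homomorphism $\NN^d\to M$, $e_i\mapsto m_i$, is surjective. It therefore induces a surjective $k$-algebra homomorphism
\[
k[x_1,\dots,x_d]=k[\NN^d]\to k[M],\qquad x_i\mapsto[m_i].
\]
Surjectivity holds because every basis element $[m]$ of $k[M]$ is the image of a monomial. By the classical Hilbert basis theorem, $k[x_1,\dots,x_d]$ is Noetherian whenever $k$ is, by induction on $d$ from $R$ Noetherian $\Rightarrow R[x]$ Noetherian. A quotient of a Noetherian ring is Noetherian, since ideals of the quotient correspond to ideals containing the kernel. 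Hence $k[M]$ is Noetherian; this is the standard result cited from \cite{Stacks}.

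\emph{Step 2: stabilization.} Fix $z\in k[M]$. The ideals $\operatorname{Ann}(z^n)$ form an increasing chain, because $rz^n=0$ implies $rz^{n+1}=(rz^n)z=0$. Commutativity ensures these are two-sided ideals, and the chain
\[
\operatorname{Ann}(z)\subseteq\operatorname{Ann}(z^2)\subseteq\cdots
\]
must stabilize by the ascending chain condition. So there is $N$ with $\operatorname{Ann}(z^n)=\operatorname{Ann}(z^N)$ for all $n\ge N$, and $N=0$ is allowed if we set $z^0=1$ and $\operatorname{Ann}(1)=0$.

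Neither step has a real obstacle. The only point needing care is that the generating set is finite, so that we obtain a polynomial ring in finitely many variables. This is exactly where the hypothesis that $M$ is finitely generated enters. Everything else is the standard ascending-chain argument.
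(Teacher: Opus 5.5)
Your proof is correct and takes essentially the same approach as the paper. The paper cites the Stacks Project for this lemma, and in the proof of Lemma~\ref{lem:uniform} it justifies Noetherianity exactly as you do, by writing the monoid algebra as a quotient of a polynomial ring in finitely many variables; stabilization of the chain $\operatorname{Ann}(z^n)$ then follows from the ascending chain condition.
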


A \emph{congruence} $\theta$ on $P$ is an equivalence relation preserved by addition \begin{center}$a\ \theta\ b\ \Longrightarrow\ (a+c)\ \theta\ (b+c)$ for all $c$.\end{center} If
$\theta$ is a congruence, its \emph{cancellative closure} is
\begin{equation}\label{eq:can}
 a\mathrel{\theta^{\mathrm{can}}}b
 \quad\Longleftrightarrow\quad
 \text{there is }u\in P\text{ with }(a+u)\mathrel\theta (b+u).
\end{equation}
It is again a congruence. The quotient $P/\theta^{\mathrm{can}}$ is the
image of $P/\theta$ in its group completion: the usual construction of the
group completion identifies two monoid elements exactly when they become
equal after adding the same element.

The next lemma upgrades the point-dependent witness $u$ in
\eqref{eq:can} to one translation that works for every pair.

\begin{lemma}[Uniform cancellation]\label{lem:uniform}
If $P$ is finitely generated and $\theta$ is a congruence, there is $c\in P$
such that
\begin{equation}\label{eq:uniform}
 a\mathrel{\theta^{\mathrm{can}}}b
 \quad\Longrightarrow\quad
 (a+c)\mathrel\theta (b+c)
 \qquad(a,b\in P).
\end{equation}
Consequently, if $P\act A$ satisfies all relations in $\theta$, the action
on $cA$ factors through $P/\theta^{\mathrm{can}}$.
\end{lemma}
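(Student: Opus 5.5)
The plan is to reduce uniform cancellation to a finiteness statement, namely that $\theta^{\mathrm{can}}$ is finitely generated as a congruence, and then absorb all witnesses into one element. By Rédei's theorem, every congruence on a finitely generated commutative monoid is finitely generated. This is the monoid analogue of Lemma~\ref{Hilbert basis theorem}: congruences on $P$ correspond to certain binomial-type ideals of $k[P]$, and Noetherianity gives the ascending chain condition. So choose finitely many generating pairs $(a_1,b_1),\dots,(a_m,b_m)$ of $\theta^{\mathrm{can}}$. For each $i$ pick a witness $u_i$ with $(a_i+u_i)\mathrel\theta(b_i+u_i)$, and put $c=u_1+\dots+u_m$. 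Since $\theta$ is a congruence, adding $c-u_i$ gives $(a_i+c)\mathrel\theta(b_i+c)$ for every $i$.

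Next, show that the relation $\rho=\{(a,b): (a+c)\mathrel\theta(b+c)\}$ is a congruence containing every generating pair. Reflexivity and symmetry are clear. Transitivity holds because $\theta$ is transitive. Compatibility with addition holds because $(a+c)\mathrel\theta(b+c)$ implies $(a+d+c)\mathrel\theta(b+d+c)$. Since $\rho$ contains the generators of $\theta^{\mathrm{can}}$, it contains $\theta^{\mathrm{can}}$, and this is exactly \eqref{eq:uniform}.

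For the consequence, let $y=cx\in cA$ and suppose $a\mathrel{\theta^{\mathrm{can}}}b$. Then $ay=(a+c)x$ and $by=(b+c)x$. Because $(a+c)\mathrel\theta(b+c)$ and the action satisfies all relations in $\theta$, these two points are equal. Hence the action on $cA$ factors through $P/\theta^{\mathrm{can}}$. Note that $cA$ is forward invariant, so this really is an action.

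The main obstacle is justifying finite generation of $\theta^{\mathrm{can}}$ from the cited Lemma~\ref{Hilbert basis theorem}, if Rédei's theorem is not quoted directly. I would argue as follows. Fix a field $k$ and consider the ideal $I_\rho\subseteq k[P]$ spanned by $e_a-e_b$ for $(a,b)\in\rho$. A strictly increasing chain of congruences produces a strictly increasing chain of these ideals, since one recovers $\rho$ from $I_\rho$ as $\{(a,b): e_a-e_b\in I_\rho\}$. This recovery uses that $k[P]/I_\rho\cong k[P/\rho]$, where monomials of distinct classes are linearly independent. Noetherianity then gives the ascending chain condition on congruences, and the ascending chain condition is equivalent to every congruence being finitely generated. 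This avoids the annihilator formulation, though the annihilator version could alternatively be applied to each generator $e_s$ to bound the witnesses directly.
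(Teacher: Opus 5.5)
Your proof is correct, but it takes a different route from the paper's.

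\textbf{Your route.} You first establish that $\theta^{\mathrm{can}}$ is finitely generated as a congruence. This is Rédei's theorem, and you derive it correctly from Noetherianity of $k[P]$, using the injective, order-preserving map $\rho\mapsto I_\rho$ and the isomorphism $k[P]/I_\rho\cong k[P/\rho]$. You then sum witnesses for a finite generating set; here $c-u_i$ should be read as $\sum_{j\ne i}u_j\in P$. Finally, a purely formal closure step finishes: $\{(a,b):(a+c)\mathrel\theta(b+c)\}$ is a congruence containing the generators.

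\textbf{The paper's route.} The paper never needs finite generation of $\theta^{\mathrm{can}}$. It works in $\QQ[P/\theta]$ with the single element $z=e_{[h]}$, where $h$ is the sum of the generators, and uses that $\operatorname{Ann}(z^n)$ stabilizes at some $N$. Any witness $u$ is padded by some $v\in P$ so that $u+v=nh$ with $n\ge N$. Then $z^n(e_{[a]}-e_{[b]})=0$ collapses to $z^N(e_{[a]}-e_{[b]})=0$, and $c=Nh$ works for all pairs at once.

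\textbf{Comparison.} The paper's argument plugs directly into the annihilator form of the Hilbert basis theorem as quoted. It also yields a canonical $c$, a multiple of one fixed element, with no closure argument. Yours needs the extra congruence--ideal dictionary to get the ascending chain condition on congruences. In exchange it is more conceptual and records the stronger structural fact that $\theta^{\mathrm{can}}$ is finitely generated. Both proofs rest on the same point: linear independence of the basis $e_{[p]}$ lets one read the congruence back off from the algebra. Neither gives an effective bound on $c$.
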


\begin{proof}
Choose generators $g_1,\ldots,g_d$ and put $h=g_1+\cdots+g_d$. Consider
the monoid algebra
\[
 R=\QQ[P/\theta].
\]
It has basis $e_{[p]}$, indexed by the $\theta$-classes, and multiplication
$e_{[p]}e_{[q]}=e_{[p+q]}$. It is a quotient of a polynomial algebra in $d$
variables, hence is Noetherian.

Put $z=e_{[h]}$. By Lemma \ref{Hilbert basis theorem}, the ascending chain of ideals
\[
 \operatorname{Ann}(1)\subseteq\operatorname{Ann}(z)
 \subseteq\operatorname{Ann}(z^2)\subseteq\cdots
\]
stabilizes, say at $\operatorname{Ann}(z^N)$. Suppose
$(a+u)\mathrel\theta(b+u)$. Write $u=\sum_i n_i g_i$ and choose
$n\ge N$ with $n\ge n_i$ for every $i$. Define
\[
 v=\sum_i(n-n_i)g_i\in P.
\]
Then $u+v=nh$ as an identity in $P$. This uses no subtraction or
cancellation in the monoid. Translating the original relation by $v$
gives $(a+nh)\mathrel\theta(b+nh)$, hence
\[
 z^n(e_{[a]}-e_{[b]})=0.
\]
Since $n\ge N$, stabilization gives $z^N(e_{[a]}-e_{[b]})=0$.
The basis vectors $e_{[p]}$ are distinct, so
$(a+Nh)\mathrel\theta(b+Nh)$. Take $c=Nh$.
For $y=cx$, equation~\eqref{eq:uniform} gives $ay=by$ whenever
$a\mathrel{\theta^{\mathrm{can}}}b$, proving the last assertion.
\end{proof}

\begin{corollary}[Cancellation reduction]\label{cor:cancellation}
Every bounded-to-one action of a finitely generated commutative monoid $M$
is quasi-isometric to a bounded-to-one action of
$\im(M\to M^{\gp})$. This image is cancellative and has rank $\rk(M)$.
\end{corollary}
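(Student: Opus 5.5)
We take $\theta$ to be the congruence on $M$ consisting of all pairs acting identically on $X$:
\[
 a\mathrel\theta b\iff ax=bx\ \text{for all }x\in X .
\]
This is a congruence: if $ax=bx$ for all $x$, then $(a+c)x=a(cx)=b(cx)=(b+c)x$. The action satisfies every relation in $\theta$. Lemma~\ref{lem:uniform} gives $c\in M$ such that the action of $M$ on $A:=cX$ factors through $M/\theta^{\mathrm{can}}$. The subset $A$ is forward invariant, and by Lemma~\ref{lem:future} the inclusion $A\hookrightarrow X$ is an isometric embedding. Moreover, $d_X(x,cx)\le\ell_S(c)$, so this inclusion is a quasi-isometry. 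The action of $M$ on $A$ is bounded-to-one, being a restriction.

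Next we pass from $M/\theta^{\mathrm{can}}$ to $Q:=\im(M\to M^{\gp})$. The action on $A$ factors through $P':=M/\theta^{\mathrm{can}}$, which is cancellative: it embeds in its group completion. The composite $M\to P'\hookrightarrow (P')^{\gp}$ factors through $M^{\gp}$, so $Q$ surjects onto $P'$ as monoids, say by $\phi$. We let $q\in Q$ act on $A$ by $\phi(q)$. The generating set of $Q$ is the image of the generating set of $M$, so the Schreier graphs of $A$ as an $M$-set and as a $Q$-set coincide. Each $q$ acts as some $m\in M$ with $m\mapsto q$, so the $Q$-action is bounded-to-one. Composing with Lemma~\ref{lem:qi}, $X$ is quasi-isometric to a bounded-to-one $Q$-action.

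It remains to check the properties of $Q$. It is a submonoid of a group, hence cancellative, and it is finitely generated. Its group completion is the subgroup of $M^{\gp}$ it generates, which is all of $M^{\gp}$ because $M^{\gp}$ is generated by the image of $M$. Hence $\rk(Q)=\rk(M)$.

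The main delicate step is the factorization: we must ensure that the uniform $c$ from Lemma~\ref{lem:uniform} makes every $\theta^{\mathrm{can}}$-relation hold on all of $cX$. This is exactly what the lemma provides, since the pairs in $\theta$ hold pointwise everywhere on $X$. Everything else is bookkeeping of generating sets and word lengths.
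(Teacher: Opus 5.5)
Your proof is correct and follows the paper's route: apply Lemma~\ref{lem:uniform}, pass to the forward image $cX$, invoke Lemma~\ref{lem:future}, and observe that the Schreier graph is unchanged and that fibers are restrictions of the original maps. The only difference is that you take $\theta$ to be the kernel congruence of the action and then pull back along the surjection $Q\twoheadrightarrow M/\theta^{\mathrm{can}}$; the paper takes $\theta$ to be the equality relation, so that $M/\theta^{\mathrm{can}}$ is already $\im(M\to M^{\gp})$ and that extra step is not needed.
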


\begin{proof}
Apply Lemma~\ref{lem:uniform} with $\theta$ equal to the equality relation. Pass from
$X$ to the forward image $cX$ and use Lemma~\ref{lem:future}. Quotient
generators act by restrictions of the original maps, so their fibers
remain bounded. The two actions on $cX$ have exactly the same Schreier
graph when the generating set is passed to the quotient.
\end{proof}

\begin{lemma}[One collision lowers rank]\label{lem:collision}
Let $P$ be finitely generated and cancellative, let $\Gamma=P^{\gp}$,
and let $P\act X$ be bounded-to-one. For $p,q\in P$, set
\[
 A_{p,q}=\{x\in X:px=qx\}.
\]
With its restricted Schreier metric, $A_{p,q}$ is quasi-isometric to a
bounded-to-one action of
\begin{equation}\label{eq:collision-monoid}
 Q_{p,q}=\im\bigl(P\longrightarrow\Gamma/\ZZ(p-q)\bigr).
\end{equation}
If $\Gamma\cong\ZZ^r$ and $p\ne q$, then $\rk(Q_{p,q})=r-1$.
\end{lemma}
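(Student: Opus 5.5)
The plan is to realize $A_{p,q}$ as a forward-invariant set on which the relation $p\sim q$ holds, and then apply Lemma~\ref{lem:uniform} with a suitable congruence. The steps run in the following order.

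First, $A_{p,q}$ is forward invariant. If $px=qx$ and $m\in P$, then commutativity gives $p(mx)=m(px)=m(qx)=q(mx)$. By Lemma~\ref{lem:future}, $A_{p,q}$ with its restricted metric is therefore isometric to $A_{p,q}$ with its own Schreier metric for the restricted $P$-action. This restricted action is still bounded-to-one, since fibers only shrink.

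Next, let $\theta$ be the congruence on $P$ generated by the pair $(p,q)$. Every $x\in A_{p,q}$ satisfies all relations of $\theta$. The reason is that the set of pairs $(a,b)$ with $ax=bx$ for all $x\in A_{p,q}$ is an equivalence relation, it is closed under translation because $A_{p,q}$ is forward invariant, and it contains $(p,q)$. Lemma~\ref{lem:uniform} then gives $c\in P$ such that the action on $cA_{p,q}$ factors through $P/\theta^{\mathrm{can}}$. By Lemma~\ref{lem:future}, the inclusion $cA_{p,q}\hookrightarrow A_{p,q}$ is a quasi-isometry.

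The main step is to identify $P/\theta^{\mathrm{can}}$ with $Q_{p,q}$. As noted after \eqref{eq:can}, $P/\theta^{\mathrm{can}}$ is the image of $P$ in $(P/\theta)^{\gp}$. So I must show $(P/\theta)^{\gp}\cong\Gamma/\ZZ(p-q)$, compatibly with $P$. This follows from universal properties. A homomorphism from $(P/\theta)^{\gp}$ to an abelian group $G$ is the same as a monoid map $P\to G$ identifying $p$ and $q$. Such a map is the same as a group map $\Gamma\to G$ killing $p-q$. Hence the kernel of $P\to(P/\theta)^{\gp}$ is given by $a\sim b$ iff $a-b\in\ZZ(p-q)$ in $\Gamma$, and the image is $Q_{p,q}$. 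The action on $cA_{p,q}$ uses the images of the original generators, so its Schreier graph is unchanged, exactly as in Corollary~\ref{cor:cancellation}. Each generator of $Q_{p,q}$ acts by a restriction of an original map, so the induced action is bounded-to-one. I expect this identification to be the main obstacle, specifically checking that factoring through $\theta^{\mathrm{can}}$ yields a genuine $Q_{p,q}$-action on $cA_{p,q}$. Well-definedness is exactly \eqref{eq:uniform}, and the graph comparison is then immediate.

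Finally, for the rank, take $\Gamma\cong\ZZ^r$ and $p\neq q$. Since $P$ is cancellative, $p\neq q$ in $\Gamma$, and $\Gamma$ is torsion-free, so $\ZZ(p-q)\cong\ZZ$. Tensoring with $\QQ$ is exact, so $(\Gamma/\ZZ(p-q))\otimes\QQ$ has dimension $r-1$. The image of $P$ generates $\Gamma/\ZZ(p-q)$ as a group, because $P$ generates $\Gamma$. Hence $Q_{p,q}^{\gp}=\Gamma/\ZZ(p-q)$ and $\rk(Q_{p,q})=r-1$.
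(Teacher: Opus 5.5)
Your proposal is correct and follows the paper's proof essentially step for step. The shared steps are: forward invariance of $A_{p,q}$, the congruence $\theta$ generated by $(p,q)$, uniform cancellation (Lemma~\ref{lem:uniform}) together with the forward-image quasi-isometry (Lemma~\ref{lem:future}), the universal-property identification $(P/\theta)^{\gp}\cong\Gamma/\ZZ(p-q)$, and the rank count using that $p-q\neq0$ in the torsion-free group $\Gamma$. Beyond that, you spell out routine checks the paper leaves implicit: that $\theta$ holds on $A_{p,q}$, that the Schreier graph is unchanged, and that $Q_{p,q}^{\gp}=\Gamma/\ZZ(p-q)$.
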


\begin{proof}
The set $A_{p,q}$ is forward invariant. Its action satisfies the congruence
$\theta$ generated by $p=q$. The group completion of $P/\theta$ is
$\Gamma/\ZZ(p-q)$, by its universal property. Thus
$P/\theta^{\mathrm{can}}$ is the monoid in
\eqref{eq:collision-monoid}. Lemmas~\ref{lem:uniform} and
\ref{lem:future} give the quasi-isometry and preserve bounded fibers.
If $\Gamma$ is torsion-free and $p\ne q$, the nonzero element $p-q$
generates a rank-one subgroup, so the quotient has rank $r-1$.
\end{proof}

The torsion-free hypothesis in the last sentence matters. A relation
whose difference is torsion need not lower rank. We remove that issue
before beginning the induction.

\section{Removing torsion}\label{sec:torsion}

We first record a useful fact about positive translates of finite sets.

\begin{lemma}[Finite translates]\label{lem:translate}
Let $P$ be a submonoid of an abelian group $\Gamma$ and suppose that $P$
generates $\Gamma$ as a group. For every finite $F\subseteq\Gamma$, there
is $b\in P$ such that $b+F\subseteq P$.
\end{lemma}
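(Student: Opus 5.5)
The plan is to reduce to a single element of $F$ and then add the resulting witnesses together. Since $P$ generates $\Gamma$ as a group, $\Gamma=P-P$; that is, every element of $\Gamma$ is a difference of two elements of $P$. To see this, note that $P-P$ contains $P$, is closed under addition (because $(p-q)+(p'-q')=(p+p')-(q+q')$), and is closed under negation. So $P-P$ is a subgroup containing $P$, and it must be all of $\Gamma$.

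For each $f\in F$ we therefore choose $p_f,q_f\in P$ with $f=p_f-q_f$. Then $q_f+f=p_f\in P$. More generally, $u+q_f+f\in P$ for every $u\in P$, since $P$ is closed under addition.

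Now set $b=\sum_{f\in F}q_f\in P$; if $F$ is empty, take $b=0$. For each $f\in F$, write $b=q_f+u_f$, where $u_f=\sum_{g\neq f}q_g\in P$. This gives $b+f=u_f+p_f\in P$, and hence $b+F\subseteq P$.

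No step here is a real obstacle. The only point needing care is the identity $\Gamma=P-P$, which holds because every element of the subgroup generated by $P$ can be written as a difference of two elements of $P$, collecting the positive and the negative terms separately. We use no cancellation in $P$, and we do not need finite generation.
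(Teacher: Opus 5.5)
Your proof is correct and is essentially the paper's argument: write each $f=p_f-q_f$, take $b=\sum_{f\in F}q_f$, and observe that $b+f=p_f+\sum_{g\ne f}q_g\in P$. Your check that $\Gamma=P-P$ spells out a step the paper leaves implicit.
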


\begin{proof}
Write each $f\in F$ as $a_f-c_f$ with $a_f,c_f\in P$, and take
$b=\sum_{f\in F}c_f$. Then
$b+f=a_f+\sum_{g\in F\setminus\{f\}}c_g\in P$.
\end{proof}

\begin{proposition}[Torsion reduction]\label{prop:torsion}
Let $P$ be a finitely generated cancellative commutative monoid. Every
bounded-to-one action $P\act X$ is quasi-isometric to a bounded-to-one
action of
\[
 \overline P=\im\bigl(P\longrightarrow P^{\gp}/\Tor(P^{\gp})\bigr).
\]
In particular, this reduction preserves rank and makes the group
completion torsion-free.
\end{proposition}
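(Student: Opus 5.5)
The plan is to realize the passage from $P$ to $\overline P$ as a bounded equivalence quotient, via Lemma~\ref{lem:quotient} followed by Lemma~\ref{lem:uniform}. Write $\Gamma=P^{\gp}$ and $T=\Tor(\Gamma)$, which is finite because $\Gamma$ is finitely generated. By Lemma~\ref{lem:translate} fix $b\in P$ with $b+T\subseteq P$. For $t\in T$ put $f_t=b+t\in P$, acting on $X$. Since $P$ is cancellative and $f_t+f_{t'}=f_{t+t'}+f_0$ holds in $\Gamma$, the same identity holds in $P$. Hence the maps $f_t$ commute with the action and satisfy $f_tf_{t'}=f_{t+t'}f_0$.

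I would first pass to a forward image $Y=cX$, with $c$ a large multiple of $b$, which is harmless by Lemma~\ref{lem:future}. On $Y$ define $y\mathrel E y'$ iff $f_0y=f_ty'$ for some $t\in T$, or more robustly iff $f_0^ky=f_t f_0^{k-1}y'$ for some $k$ and $t$. The latter form should make $E$ transitive, since composing two witnesses uses $f_tf_s=f_{t+s}f_0$. It is also symmetric, using $f_{-t}f_t=f_0f_0$.

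The key steps, in order, are as follows.
\begin{enumerate}[label=(\alph*),itemsep=0.15em,topsep=0.3em]
\item $E$ is a $P$-invariant equivalence relation. Invariance holds because $P$ commutes with all $f_t$.
\item Each $E$-class has diameter at most $2\ell_S(f_0)+\max_t\ell_S(f_t)$ (times $k$), by \eqref{eq:future}.
\item Each class has at most $|T|\cdot k_{f_0}^{k}$ points, because $y'$ is determined up to the bounded fibers of $f_0^{k}$ once $t$ and $f_0^ky$ are fixed.
\end{enumerate}
Lemma~\ref{lem:quotient} then shows that $Z=Y/E$ is quasi-isometric to $Y$, hence to $X$, and that it carries a bounded-to-one $P$-action.

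Next I check that on $Z$ every pair $a,a'$ with $a-a'\in T$ acts identically, at least after a further forward translation. Write $a'=a+t$ in $\Gamma$. Then $f_0(a'y)=f_t(ay)$ holds in $P$ by cancellativity, since both sides correspond to $a+b+t$. Hence $a'y\mathrel E ay$. Thus the action on $Z$ satisfies the congruence $\theta$ given by $a\,\theta\,a'\iff a-a'\in T$. This congruence is already cancellative, and $P/\theta=\overline P$. So Lemma~\ref{lem:uniform} is not even needed here, beyond possibly one more forward image. Rank is preserved because $T\otimes\QQ=0$, and $\overline P$ generates the torsion-free group $\Gamma/T$. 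The generators of $\overline P$ act by the induced maps, which are bounded-to-one by Lemma~\ref{lem:quotient}.

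The main obstacle is choosing the exact form of $E$ so that it is genuinely transitive and still has uniformly bounded classes. The naive relation ``$f_0y=f_ty'$'' composes to $f_0^2y=f_0f_{t+s}y''$, and $f_0$ cannot be cancelled on points. I would first try to define $E$ using exponent $k=1$ on the forward image $Y=f_0^{m}X$, and show that the powers stabilize, in analogy with the annihilator stabilization of Lemma~\ref{Hilbert basis theorem}. The relevant statement is that the relations $f_0^ky=f_tf_0^{k-1}y'$ on $Y$ stop growing with $k$; equivalently, one applies Lemma~\ref{lem:uniform} to the congruence on $P\times T$-words describing the action. If that fails, I would instead take $E$ to be the union over $k\le |T|$. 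I would bound class sizes directly using $|T|$-fold composition: since $f_t^{|T|}=f_0^{|T|}$, chains of length more than $|T|$ are redundant.
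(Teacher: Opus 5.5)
The gap is the obstacle you flag yourself. The proposal never produces a $P$-invariant equivalence relation with uniformly bounded classes that contains the pairs $(a'y,ay)$ with $a'-a\in T$. As a result, (a)--(c) are asserted for a relation that is never actually defined.

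\textbf{Your candidate relations.} The level-$k$ relations $f_0^ky=f_tf_0^{k-1}y'$ increase with $k$. Your symmetry argument via $f_{-t}f_t=f_0f_0$ raises the level, and so does composing witnesses.

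\emph{The union over all $k$.} It is an equivalence relation, but its classes can be infinite. Take $P\subseteq\ZZ\times\ZZ/2$ generated by $g=(1,0)$ and $h=(1,1)$, with $b=g$. Let $g$ and $h$ both act on $\{0,1\}^{\NN}$ by the left shift $\sigma$. Then level $k$ is $\sigma^ky=\sigma^ky'$. It grows strictly in $k$ on every forward image, since $\sigma$ is onto, and the union is tail equivalence. This also refutes the hoped-for stabilization. Lemma~\ref{lem:uniform} stabilizes relations between monoid elements valid on a whole forward image, not relations between individual points.

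\emph{The fallback $\bigcup_{k\le|T|}$.} This is just level $|T|$. The identity $f_t^{|T|}=f_0^{|T|}$ does not shorten chains: $n$ composed witnesses give $f_0^Ky=f_{t'}f_0^{K-1}y''$ with $K\ge n$. It is the accumulated powers of $f_0$, not the indices, that cannot be cancelled on points.

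\emph{No fixed level need be symmetric or transitive.} Let $g,h$ act on $V\times V$, $V=(\ZZ/2)^{\NN}$, by $(u,v)\mapsto(\sigma u,\sigma v)$ and $(u,v)\mapsto(\sigma u,\sigma v+u)$. These maps commute, are onto and $4$-to-$1$, and satisfy $2g=2h$. Let $e_0,e_1$ be the first two unit sequences, and take $y=(e_0,e_1)$, $y'=(e_0,0)$, $y''=(0,0)$. Then $f_0y=f_1y'$, $f_0y'=f_0y''$ and $f_0y''=f_1y$, but $f_0y\notin\{f_0y'',f_1y''\}$. Surjectivity lifts this to every level and every forward image. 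Separately, in (c) the point $y'$ ranges over a fiber of $f_tf_0^{k-1}$, not of $f_0^k$.

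\textbf{What is missing} is a relation that composes without accumulating powers of $f_0$.

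\emph{The paper's route.} It generates $E$ from the pairs $(f_0z,f_tz)=(bz,(b+t)z)$. Each such pair satisfies $f_u(x)=f_{u-t}(y)$ for \emph{all} $u\in T$ at once. This family of identities composes along $E$-chains by adding the shifts. Hence $xEy$ forces $f_0x=f_ty$ for some $t$. That gives class diameter at most $\ell_S(b)+\max_t\ell_S(b+t)$ and at most $\sum_t k_{b+t}$ points per class. The factorization through $\overline P$ is then checked on the forward image $bZ$, using a generating pair with $z=qx$.

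\emph{A repair closer to your plan.} Set $y\mathrel{E}y'$ if and only if $\{(b+u)y:u\in T\}=\{(b+u)y':u\in T\}$.
\begin{itemize}
\item It is an equivalence relation by definition and is $P$-invariant.
\item $y\mathrel{E}y'$ gives $by=(b+w)y'$ for some $w\in T$, hence the same bounds.
\item If $a'-a\in T$, then $a'y\mathrel{E}ay$ for every $y$, because by cancellativity the two sets agree after reindexing $u$ by $a'-a$.
\end{itemize}
With this $E$ your remaining steps go through, even without passing to a forward image.
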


\begin{proof}
Write $\Gamma=P^{\gp}$ and $T=\Tor(\Gamma)$. The group $T$ is finite.
Choose $b\in P$ with $b+T\subseteq P$, and define
\[
 F_t(x)=(b+t)x\qquad(t\in T).
\]
Let $E$ be the equivalence relation generated by
\begin{equation}\label{eq:torsion-edges}
 F_0(z)\mathrel E F_t(z)\qquad(z\in X,\ t\in T).
\end{equation}
It is $P$-invariant by commutativity.

We verify uniform diameter and cardinality bounds. For an elementary pair
$x=F_0(z)$ and $y=F_t(z)$, the identities
\begin{equation}\label{eq:torsion-identities}
 F_u(x)=F_{u-t}(y)\qquad(u\in T)
\end{equation}
hold simultaneously. Reversing an edge reverses the index shift, and
composing edges adds the shifts. Along an arbitrary finite $E$-chain we
therefore obtain
\begin{equation}\label{eq:torsion-chain}
 xEy\quad\Longrightarrow\quad F_0(x)=F_t(y)
 \text{ for some }t\in T.
\end{equation}
The fact that~\eqref{eq:torsion-identities} holds for \emph{every} $u$ is
what permits this composition.

Fix a finite generating set $S$ for $P$. Equation~\eqref{eq:torsion-chain}
and the common-future formula give
\[
 d_X(x,y)\le \ell_S(b)+\max_{t\in T}\ell_S(b+t)=:D
 \quad\text{whenever }xEy.
\]
If $F_t$ is at most $k_t$-to-one, the same equation gives
\[
 [x]_E\subseteq\bigcup_{t\in T}F_t^{-1}(\{F_0(x)\}),
 \qquad |[x]_E|\le\sum_{t\in T}k_t=:C.
\]
Lemma~\ref{lem:quotient} now applies to $Z=X/E$. Thus $X$ and $Z$ are
quasi-isometric and the induced $P$-action is bounded-to-one.

Pass next to the forward image $Y=bZ$, which is quasi-isometric to $Z$ by
Lemma~\ref{lem:future}. The action on $Y$ factors through $\overline P$.
Indeed, if $p-q=t\in T$ and $y=\pi(bx)$, then
\[
 \begin{aligned}
 py&=\pi((b+p)x)
     =\pi((b+t)(qx))\\
   &=\pi(b(qx))=qy,
 \end{aligned}
\]
where the third equality is one of the generating relations
\eqref{eq:torsion-edges}. The quotient generators again act by restrictions
of the original induced maps, so bounded-to-one is preserved. Finally,
$\overline P$ generates $\Gamma/T$, proving the rank assertion.
\end{proof}

\begin{corollary}\label{cor:lattice-reduction}
A bounded-to-one action of a finitely generated commutative monoid of
rank $r$ is quasi-isometric to a bounded-to-one action of a finitely
generated submonoid $P\subseteq\ZZ^r$ that generates $\ZZ^r$ as a group.
\end{corollary}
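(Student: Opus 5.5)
The plan is to compose the two reductions already proved, then identify the resulting group completion with $\ZZ^r$. Start with a finitely generated commutative monoid $M$ of rank $r$ and a bounded-to-one action $M\act X$. By Corollary~\ref{cor:cancellation}, this action is quasi-isometric to a bounded-to-one action of $P_1=\im(M\to M^{\gp})$. The monoid $P_1$ is cancellative, finitely generated (as the image of a finitely generated monoid), and has rank $r$.

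Next apply Proposition~\ref{prop:torsion} to $P_1$. This gives a quasi-isometry to a bounded-to-one action of $P=\overline{P_1}=\im\bigl(P_1\to P_1^{\gp}/\Tor(P_1^{\gp})\bigr)$. Quasi-isometry is transitive, since composing maps with constants $(a,b,R)$ and $(a',b',R')$ again gives affine bounds and coarse surjectivity. Hence $X$ is quasi-isometric to this $P$-action. One small point must be checked: the second step concerns Schreier metrics for chosen generating sets. This is harmless, because changing the finite generating set gives a bi-Lipschitz identity map, as noted in the Preliminaries.

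It remains to identify the ambient group. The group $\Gamma=P_1^{\gp}$ is a finitely generated abelian group, so $\Gamma/\Tor(\Gamma)$ is finitely generated and torsion-free, hence free abelian. Its rank equals $\dim_\QQ(\Gamma\otimes\QQ)=r$, since tensoring with $\QQ$ kills torsion. Fix an isomorphism $\Gamma/\Tor(\Gamma)\cong\ZZ^r$. Under it, $P$ becomes a finitely generated submonoid of $\ZZ^r$. Since $P_1$ generates $\Gamma$, its image $P$ generates the quotient $\ZZ^r$ as a group, which gives the last claim.

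There is no real obstacle. The only points needing care are the bookkeeping ones: that $P_1^{\gp}$ really equals the subgroup of $M^{\gp}$ generated by $P_1$, so that its rank is $\rk(M)$, and that transporting the action along the isomorphism $\Gamma/\Tor(\Gamma)\cong\ZZ^r$ changes neither the Schreier graph nor the bounded-to-one property.
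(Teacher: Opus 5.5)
Your proposal is correct and matches the paper's proof: compose Corollary~\ref{cor:cancellation} with Proposition~\ref{prop:torsion}, then identify the torsion-free quotient $\Gamma/\Tor(\Gamma)$ with $\ZZ^r$. Your extra bookkeeping is accurate and makes explicit what the paper's one-line proof leaves implicit: that quasi-isometry is an equivalence relation (you need symmetry as well as transitivity, since the two reductions produce maps in opposite directions), that the choice of generating set does not matter, and that $\rk$ is computed via $\Gamma\otimes\QQ$.
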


\begin{proof}
Combine Corollary~\ref{cor:cancellation} with
Proposition~\ref{prop:torsion} and identify the resulting torsion-free
finitely generated abelian group with $\ZZ^r$.
\end{proof}

\section{Finite-scale control on locally free points}\label{sec:local}

This section proves the local statement needed for induction. The
construction uses only a finite coloring and a greedy selection of lattice
markers. No bound on the number of colors will enter the dimension bound.

For an integer $R\ge0$, put
\[
 C_R=\{v\in\ZZ^r:\norm v\le R\}.
\]
For $R\ge1$, let $H_R$ be the graph on $\ZZ^r$ in which distinct vertices
are adjacent when their $\ell^\infty$-distance is at most $2R$. Its degree
is $(4R+1)^r-1$.

\begin{lemma}[A finite-window lattice rule]\label{lem:lattice-rule}
Let $r,R\ge1$ and $q\ge(4R+1)^r$. There is a function
\[
 \mathcal A:\{1,\ldots,q\}^{C_W}\longrightarrow C_{2R},
 \qquad W=2R(q+1),
\]
with the following property. If $c:\ZZ^r\to\{1,\ldots,q\}$ is a proper
coloring of $H_R$, define
\[
 \psi_c(g)=g+\mathcal A\bigl((c(g+v))_{v\in C_W}\bigr).
\]
Then
\begin{equation}\label{eq:lattice-count}
 \bigl|\{\psi_c(g):g\in C_R\}\bigr|\le3^r.
\end{equation}
\end{lemma}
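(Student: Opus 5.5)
The plan is to build, from the proper coloring $c$, a maximal $2R$-separated set of ``markers'' in $\ZZ^r$ by a greedy pass over the colors. Each $g$ is then sent to a nearby marker. A marker within $\ell^\infty$-distance $2R$ of some point of $C_R$ lies in $C_{3R}$, so the count~\eqref{eq:lattice-count} reduces to a packing estimate. The only real work is to check that the construction is local, i.e.\ readable from the window $C_W$.

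\emph{Greedy markers.} Define $S_c\subseteq\ZZ^r$ by recursion on the color value. A vertex $h$ with $c(h)=i$ belongs to $S_c$ if and only if no vertex $h'$ with $0<\norm{h'-h}\le 2R$ and $c(h')<i$ belongs to $S_c$. Since properness makes the colors of $H_R$-neighbours distinct, this is a well-founded recursion of depth at most $q$. By induction on $i$, membership of a color-$i$ vertex $h$ depends only on the colors in $h+C_{2R(i-1)}$, hence on those in $h+C_{2R(q-1)}$.

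Two facts follow for proper $c$:
\begin{itemize}[itemsep=0.15em,topsep=0.3em]
\item \emph{Separation.} Distinct points of $S_c$ are at $\ell^\infty$-distance $>2R$. If two were adjacent in $H_R$, the one of larger color would have been rejected.
\item \emph{Maximality.} Every $h\notin S_c$ has a point of $S_c$ within distance $2R$. The only way $h$ can be rejected is by such a neighbour.
\end{itemize}
Consequently every $g$ has a marker in $g+C_{2R}$.

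\emph{The rule $\mathcal A$.} Let $\mathcal A$ output the lexicographically least $v\in C_{2R}$ for which $g+v$ is declared a marker by the recursion above. If there is no such $v$, let $\mathcal A$ output $0$; this can only happen for improper inputs, which the lemma does not constrain. Deciding membership of $g+v$ needs the colors in $g+v+C_{2R(q-1)}\subseteq g+C_{2Rq}$. Since $2Rq\le W$, the function $\mathcal A$ is indeed a function of $(c(g+v))_{v\in C_W}$. Here I run the recursion formally on the finite pattern, with ``smaller color'' read off the pattern, so that $\mathcal A$ is defined for every input. For proper $c$ this gives $\psi_c(g)\in S_c$ with $\norm{\psi_c(g)-g}\le 2R$.

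\emph{Counting.} For $g\in C_R$ the image $\psi_c(g)$ lies in $S_c\cap C_{3R}$. Split $[-3R,3R]\cap\ZZ$ into the three intervals
\[
[-3R,-R-1],\qquad [-R,R],\qquad [R+1,3R],
\]
each of diameter at most $2R$. Taking products gives $3^r$ boxes of $\ell^\infty$-diameter $\le 2R$ covering $C_{3R}$. By separation each box contains at most one marker, so there are at most $3^r$ images.

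The main point needing care is the locality bookkeeping, namely that the recursion radius is $2R(q-1)$ and the extra $2R$ for the search still fits inside $W=2R(q+1)$. The hypothesis $q\ge(4R+1)^r$ is only what guarantees that proper colorings exist; it plays no role in the argument itself.
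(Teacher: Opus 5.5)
Your proposal is correct and matches the paper's proof essentially step for step. Both arguments select a maximal independent set of $H_R$ greedily, color by color. Both send $g$ to the lexicographically first selected point in $g+C_{2R}$, check that this is readable from $C_W$, and count selected points in $C_{3R}$ using $3^r$ boxes of diameter at most $2R$. Your locality radius $2R(q-1)$ is slightly sharper than the paper's $2Rq$, and your three-interval split differs trivially; neither changes the argument.
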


\begin{proof}
Process the colors in the order $1,\ldots,q$. At stage $i$, select every
vertex of color $i$ having no previously selected neighbor. Vertices
selected at the same stage are nonadjacent, since the coloring is proper.
The resulting set $D$ is independent in $H_R$ and maximal. In particular,
every vertex is within distance $2R$ of a point of $D$.

Whether a vertex is selected depends only on the coloring within distance
$2Rq$ of it: each stage can enlarge the needed window by at most $2R$.
Choose $\psi_c(g)$ to be the selected point in $g+C_{2R}$ whose displacement
from $g$ is first in a fixed lexicographic order. This choice depends only
on the colors in $g+C_W$ and is translation-equivariant. It therefore
defines the rule $\mathcal A$. The same finite recursion defines the rule
on arbitrary input patterns; use displacement $0$ if no candidate is
selected. Its asserted properties are only needed for proper colorings.

For $g\in C_R$, the point $\psi_c(g)$ lies in $D\cap C_{3R}$. Partition
the integer interval $[-3R,3R]$ into
\[
 [-3R,-R],\qquad [-R+1,R],\qquad [R+1,3R].
\]
Their Cartesian products partition $C_{3R}$ into $3^r$ boxes, each of
$\ell^\infty$-diameter at most $2R$. Independence of $D$ permits at most one
point of $D$ in each box, proving~\eqref{eq:lattice-count}.
\end{proof}

The next lemma transfers this rule to a monoid action. Its conclusion
counts monoid elements, not merely their images in the action.

\begin{lemma}[Finite-pattern compression]\label{lem:compression}
Let $P\subseteq\ZZ^r$ generate $\ZZ^r$ as a group, where $r\ge1$, and let
$P\act X$ be bounded-to-one. For every finite $F\subseteq P$, there are
finite sets $K,L\subseteq P$ and a function $\lambda:X\to L$ such that
\begin{equation}\label{eq:compression}
 \left[p\mapsto px\text{ is injective on }K\right]
 \quad\Longrightarrow\quad
 \bigl|\{m+\lambda(mx):m\in F\}\bigr|\le3^r.
\end{equation}
\end{lemma}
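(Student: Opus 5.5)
The plan is to transfer Lemma~\ref{lem:lattice-rule} to the action. For a point $x$, we read a coloring of $\ZZ^r$ off the forward orbit of $x$: the color at $g$ is the color of $(b+g)x$, for a fixed offset $b\in P$. The window rule $\mathcal A$ is then evaluated at the point $mx$. Commutativity turns the window pattern of $mx$ into the window pattern of $x$ translated by $m$. Local injectivity on $K$ is used only to make the read-off coloring proper on a finite box.

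\textbf{Choice of parameters.} Choose $R\ge1$ with $F\subseteq C_R$, and put $W=2R(q+1)$, with $q$ to be fixed below. By Lemma~\ref{lem:translate}, choose $b\in P$ with $b+C_{R+W}\subseteq P$, and $b_0\in P$ with $b_0+C_{2R}\subseteq P$. Set
\[
K=b+C_{R+W}, \qquad L=b_0+C_{2R}.
\]

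\textbf{The coloring of $X$.} Define a graph $G$ on $X$ as follows. Join $y\neq y'$ whenever there are $z\in X$ and distinct $v,v'\in C_{R+W}$ with $\norm{v-v'}\le 2R$, $y=(b+v)z$ and $y'=(b+v')z$. The degree of $G$ is bounded. Given $y$ and $v$, there are at most $k_{b+v}$ choices of $z$, and then $v'$ together with $z$ determines $y'$. This gives a uniform bound $\Delta$ on the degree. Take
\[
q\ge\max\bigl(\Delta+1,(4R+1)^r\bigr).
\]
Since $W$ depends on $q$ and $K$ on $W$, while the edges of $G$ are defined using $C_{R+W}$, this choice is circular as stated. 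I would break the circularity by fixing $q$ first from a degree bound on a larger, pre-chosen family, or by bounding $\Delta$ through the fibers, which is what needs checking. With $q$ fixed, a greedy coloring (choice suffices, since no Borel requirement is needed here) gives a proper coloring $c:X\to\{1,\dots,q\}$ of $G$.

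\textbf{Definition of $\lambda$.} Put
\[
\lambda(y)=b_0+\mathcal A\bigl((c((b+v)y))_{v\in C_W}\bigr)\in L.
\]

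\textbf{Verification of \eqref{eq:compression}.} Suppose $p\mapsto px$ is injective on $K$. Define $c_x(g)=c((b+g)x)$ for $g\in C_{R+W}$. For distinct $g,g'\in C_{R+W}$ with $\norm{g-g'}\le2R$, the points $(b+g)x$ and $(b+g')x$ are distinct by injectivity. They are adjacent in $G$, witnessed by $z=x$, so $c_x$ is a proper coloring of $H_R$ restricted to $C_{R+W}$.

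Since $H_R$ has degree $(4R+1)^r-1<q$, we extend $c_x$ greedily to a proper coloring $\tilde c$ of all of $H_R$ on $\ZZ^r$. For $m\in F\subseteq C_R$ and $v\in C_W$, commutativity gives $(b+v)(mx)=(b+m+v)x$, and $m+v\in C_{R+W}$. Hence
\[
\lambda(mx)=b_0+\mathcal A\bigl((\tilde c(m+v))_{v\in C_W}\bigr),
\qquad\text{so}\qquad
m+\lambda(mx)=b_0+\psi_{\tilde c}(m).
\]
Lemma~\ref{lem:lattice-rule} now bounds the number of values by $3^r$, since translating by $b_0$ does not change the count.

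\textbf{Main obstacle.} The delicate points are twofold. First, the rule $\mathcal A$ formally requires a proper coloring of all of $\ZZ^r$, while only a finite box is controlled; the greedy extension, together with the fact that $\psi$ on $C_R$ reads only colors in $C_{R+W}$, handles this. Second, the ordering of parameters must be arranged so that $q$ bounds the degree of $G$, as flagged above.
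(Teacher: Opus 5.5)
\textbf{Genuine gap: the parameter circularity you flag is never resolved.} Your transfer of the lattice rule matches the paper's, and your verification step is sound:
\begin{itemize}
\item injectivity on $K$ makes the read-off coloring $c_x$ proper on $C_{R+W}$;
\item the greedy extension to $\ZZ^r$ is harmless, because $\psi$ on $C_R$ only reads colors in $C_{R+W}$;
\item commutativity gives $m+\lambda(mx)=b_0+\psi_{\tilde c}(m)$.
\end{itemize}
But the circularity is the real content of the lemma, not bookkeeping. Your graph $G$ is built from $b$ and $C_{R+W}$. The only degree bound you offer is $\sum_{v\in C_{R+W}}k_{b+v}\cdot((4R+1)^r-1)$, which grows at least like $|C_{R+W}|$. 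That quantity already exceeds $W=2R(q+1)>q$. So with this bound the requirements $q\ge\Delta+1$ and $W=2R(q+1)$ are outright incompatible, not merely circular. Neither of your two suggested repairs is carried out, so the existence of a proper $q$-coloring of $G$ is not established.

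\textbf{The missing idea is to color a graph that does not depend on the window.} Fix $a\in P$ with $a+C_{2R}\subseteq P$. Join distinct points $az$ and $(a+v)z$ for $z\in X$ and $v\in C_{2R}\setminus\{0\}$. This graph has degree at most $(|C_{2R}|-1)k_a+\sum_{v\in C_{2R}\setminus\{0\}}k_{a+v}$, which depends only on $R$ and the fiber bounds. Now fix $q$ from this degree, then $W=2R(q+1)$. Only afterwards use Lemma~\ref{lem:translate} to choose $b$ with $b+C_{R+W}-a\subseteq P$, and set $K=b+C_{R+W}$. For $g,h\in C_{R+W}$ with $0<\norm{h-g}\le 2R$, the point $z=(b+g-a)x$ is a legitimate point. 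It satisfies $(b+g)x=az$ and $(b+h)x=(a+(h-g))z$. Injectivity on $K$ then makes these two points adjacent in the fixed graph, so $c_x$ is proper. With this stronger choice of $b$, your $G$ becomes a subgraph of the fixed graph, and the rest of your argument goes through unchanged.

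\textbf{Your fiber-counting route can also be completed, but only with an extra argument you do not give.} For each $w\in C_{2R}\setminus\{0\}$, the set $\{p\in P:p+w\in P\}$ is an ideal of $P$. It is finitely generated by Dickson's lemma, say by $p_1,\dots,p_n$. Writing $p=p_i+u$ with $u\in P$ shows that every $G$-neighbor of $y$ with difference $w$ lies in $\{(p_i+w)z':p_iz'=y\}$ for some $i$. This gives a degree bound independent of $W$ and $b$. The paper's choice of a fixed base point $a$ avoids this Noetherian step.
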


\begin{proof}
Choose $R\ge1$ with $F\subseteq C_R$. By Lemma~\ref{lem:translate}, choose
$a\in P$ so that $a+C_{2R}\subseteq P$. Define a graph $H$ on $X$ by
joining distinct points of the form
\begin{equation}\label{eq:auxiliary-edges}
 az,\ (a+v)z
 \qquad(z\in X,\ v\in C_{2R}\setminus\{0\}).
\end{equation}
This graph has uniformly bounded degree. More explicitly, if $k_p$ bounds
the fibers of $p$, then its degree is at most
\[
 (|C_{2R}|-1)k_a+
 \sum_{v\in C_{2R}\setminus\{0\}}k_{a+v}.
\]
Choose $q$ larger than this bound and at least $(4R+1)^r$, and fix a proper
$q$-coloring $c$ of $H$. Such a coloring exists by greedy coloring of a
well-ordering of the vertices. No measurability is being asserted here.

Let $W$ and $\mathcal A$ be supplied by Lemma~\ref{lem:lattice-rule}, and
put $V=R+W$. Choose $b\in P$ such that
\begin{equation}\label{eq:positive-window}
 b+C_V-a\subseteq P.
\end{equation}
Set $K=b+C_V$ and $L=b+C_{2R}$. These are subsets of $P$, since $a\in P$.
Define on all of $X$
\begin{equation}\label{eq:lambda}
 \lambda(y)=b+
 \mathcal A\bigl((c((b+v)y))_{v\in C_W}\bigr).
\end{equation}
All action maps in this expression are defined, and its value belongs to
$L$.

Fix $x$ for which evaluation on $K$ is injective. The coloring
\[
 c_x(g)=c((b+g)x)\qquad(g\in C_V)
\]
is proper for the induced graph $H_R\upharpoonright C_V$. Indeed, for
adjacent $g,h\in C_V$, let $v=h-g$ and
$z=(b+g-a)x$. By~\eqref{eq:positive-window}, this is a legitimate point of
the action, and
\[
 (b+g)x=az,\qquad (b+h)x=(a+v)z.
\]
The two points are distinct by injectivity on $K$, so they are adjacent in
$H$ and receive different colors.

Because $q$ exceeds the degree of $H_R$, extend $c_x$ to a proper
$q$-coloring $\widetilde c_x$ of all of $\ZZ^r$, by greedily coloring the
remaining countably many vertices. For $m\in F$, the window $m+C_W$ lies
inside $C_V$. Commutativity and~\eqref{eq:lambda} therefore give
\[
 m+\lambda(mx)
 =b+m+\mathcal A\bigl((\widetilde c_x(m+v))_{v\in C_W}\bigr)
 =b+\psi_{\widetilde c_x}(m).
\]
Now $F\subseteq C_R$, so~\eqref{eq:lattice-count} proves
\eqref{eq:compression}.
\end{proof}

\begin{remark}\label{rem:parameters}
There is no circular choice of parameters in this construction. The graph
$H$ and its number of colors $q$ are chosen before the larger window $C_V$
and its translate $b$. Enlarging the window does not require recoloring a
larger auxiliary graph. The number $q$ affects displacement bounds, but
not the multiplicity bound $3^r$.
\end{remark}

\begin{corollary}[Local-freeness witness]\label{cor:local}
Let $P\subseteq\ZZ^r$ be finitely generated and generate $\ZZ^r$ as a
group, with $r\ge1$. For a bounded-to-one action $P\act X$ and every
integer $s\ge1$, there are a finite $K\subseteq P$, a finite $B$, and a
function $f:X\to X$ such that
\begin{align}
 d_X(y,f(y))&\le B &&(y\in X),\label{eq:displacement}\\
 p\mapsto px\text{ injective on }K
 &\quad\Longrightarrow\quad |f[B_X(x,s)]|\le3^r.
 \label{eq:local-multiplicity}
\end{align}
\end{corollary}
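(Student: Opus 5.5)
We derive Corollary~\ref{cor:local} from Lemma~\ref{lem:compression}. The idea is to use the common-future formula~\eqref{eq:future} to move every point of $B_X(x,s)$ forward into a common region of the $P$-orbit of $x$. There the function $\lambda$ turns it into a point of the form $(m+\lambda(mx))x$ with $m$ in a fixed finite set.

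\emph{Step 1: the finite set $F$.} Fix $s$ and a finite generating set $S$. By~\eqref{eq:future}, every $y\in B_X(x,s)$ satisfies $ay=bx$ for some $a,b\in P$ with $\ell_S(a)+\ell_S(b)\le s$. Let $T=\{p\in P:\ell_S(p)\le s\}$. Choose $e\in P$ with $e-T\subseteq P$ inside $\ZZ^r$; this is possible by Lemma~\ref{lem:translate} applied to the finite set $-T$. Then
\[
 ey=(e-a)(ay)=(e-a+b)x .
\]
Put $F=\{e-a+b: a,b\in T\}\subseteq P$. This set is finite, and for every $y\in B_X(x,s)$ we have $ey=mx$ for some $m\in F$. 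Moreover, $F$ depends only on $s$.

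\emph{Step 2: defining $f$.} Apply Lemma~\ref{lem:compression} to $F$ to get $K$, $L$ and $\lambda$. Define
\[
 f(y)=\lambda(ey)\,(ey)=(e+\lambda(ey))\,y .
\]
This displaces $y$ by at most $B:=\ell_S(e)+\max_{l\in L}\ell_S(l)$, which proves~\eqref{eq:displacement}.

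\emph{Step 3: the multiplicity bound.} Suppose evaluation at $x$ is injective on $K$, and let $y\in B_X(x,s)$. Write $ey=mx$ with $m\in F$. Then
\[
 f(y)=\lambda(mx)\,mx=(m+\lambda(mx))\,x .
\]
Hence $f[B_X(x,s)]$ is contained in the image under $p\mapsto px$ of the set $\{m+\lambda(mx):m\in F\}$. By~\eqref{eq:compression}, that set has at most $3^r$ elements. This proves~\eqref{eq:local-multiplicity}.

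The main point to check is that $f(y)$ is well defined even though the representation $ey=mx$ need not be unique. This is fine because $f$ is defined intrinsically from $y$ through $\lambda(ey)$, and the computation in Step 3 holds for any choice of $m$ with $ey=mx$. No injectivity is needed for $y$ itself; injectivity on $K$ is only used, inside Lemma~\ref{lem:compression}, at the base point $x$.
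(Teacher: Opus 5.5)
Your proof is correct and follows essentially the paper's argument. You push $B_X(x,s)$ forward by a fixed element into $Fx$ using the common-future formula, apply Lemma~\ref{lem:compression} to $F$, and set $f(y)=\lambda(ey)(ey)$. The only cosmetic difference is the choice of shift: the paper takes the explicit $e=sh$ with $h=\sum_{u\in S}u$, whereas you obtain $e$ from Lemma~\ref{lem:translate} applied to $-T$.
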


\begin{proof}
Let $S$ be the chosen finite generating set and put $h=\sum_{u\in S}u$.
For $u\in S$ we have $h-u\in P$. Consequently, if $\ell_S(b)\le s$, then
$sh-b\in P$. The set
\[
 F=\{a+sh-b:a,b\in P,\ \ell_S(a)+\ell_S(b)\le s\}
\]
is therefore a finite subset of $P$. The common-future formula implies
\begin{equation}\label{eq:ball-forward}
 (sh)B_X(x,s)\subseteq Fx\qquad(x\in X):
\end{equation}
if $ax=by$ with $\ell_S(a)+\ell_S(b)\le s$, apply $sh-b$ to obtain
$(sh)y=(a+sh-b)x$.

Apply Lemma~\ref{lem:compression} to $F$, obtaining $K,L,\lambda$. Define
\[
 f(y)=\lambda((sh)y)((sh)y).
\]
It moves points a distance at most
$\ell_S(sh)+\max_{\ell\in L}\ell_S(\ell)$. If evaluation on $K$ is
injective at $x$, then~\eqref{eq:ball-forward} gives
\[
 f[B_X(x,s)]
 \subseteq\{(m+\lambda(mx))x:m\in F\},
\]
which has at most $3^r$ elements by~\eqref{eq:compression}.
\end{proof}

\section{Induction on rank}\label{sec:induction}

\begin{proof}[Proof of Theorem~\ref{thm:main}]
We induct on $r$, proving the assertion simultaneously for all finitely
generated commutative monoids of that rank and all their bounded-to-one
actions. By Corollary~\ref{cor:lattice-reduction}, it is enough to consider
$P\subseteq\ZZ^r$ generating $\ZZ^r$ as a group. When $r=0$, this monoid
is trivial and all Schreier components are singletons, so $N_0=0$ works.

Let $r\ge1$ and assume the result in rank $r-1$. Fix an integer $s\ge1$.
Apply Corollary~\ref{cor:local} at scale $2s$, obtaining $K,B,f$. Put
\begin{equation}\label{eq:bad-good}
 A=\bigcup_{\substack{p,q\in K\\p\ne q}}A_{p,q},
 \qquad U=X\setminus A.
\end{equation}
Thus every point of $U$ passes the injectivity test on $K$.

\paragraph{The collision part.}
Each $A_{p,q}$ is forward invariant, so its intrinsic Schreier metric is
the restricted metric. Since $p-q\ne0$ in $\ZZ^r$,
Lemma~\ref{lem:collision} and the induction hypothesis imply
\[
 \asdim A_{p,q}\le N_{r-1}.
\]
There are only finitely many pairs in~\eqref{eq:bad-good}. The finite-union
theorem gives $\asdim A\le N_{r-1}$. Choose a uniformly bounded partition
$\cP_A$ of $A$ such that every $2s$-ball in the subspace $A$ meets at most
$N_{r-1}+1$ parts. If $A$ is empty, take the empty partition.

\paragraph{The locally free part.}
Partition $U$ into the nonempty fibers of $f\upharpoonright U$, and call
this partition $\cP_U$. By~\eqref{eq:displacement}, its parts have diameter
at most $2B$. For each $u\in U$, equation~\eqref{eq:local-multiplicity} at
scale $2s$ says that $B_X(u,2s)$ meets at most $3^r$ parts of $\cP_U$.

\paragraph{Recombining the two parts.}
The family $\cP_A\cup\cP_U$ is a uniformly bounded partition of $X$.
Consider $B_X(x,s)$. If it meets $A$, choose $a\in A\cap B_X(x,s)$. Then
\[
 B_X(x,s)\cap A\subseteq B_A(a,2s),
\]
so this portion meets at most $N_{r-1}+1$ parts. If it meets $U$, choose
$u\in U\cap B_X(x,s)$; similarly,
\[
 B_X(x,s)\cap U\subseteq B_X(u,2s),
\]
so this portion meets at most $3^r$ parts. In total the ball meets at most
\[
 (N_{r-1}+1)+3^r=N_r+1
\]
parts. Since $s$ was arbitrary, $\asdim X\le N_r$.
\end{proof}

\begin{remark}[Uniformity and quantifiers]\label{rem:quantifiers}
The finite set $K$, the collision sets in~\eqref{eq:bad-good}, and all
diameter bounds may depend on the scale, the monoid, the action, and its
fiber bounds. This is permitted in the definition of asymptotic dimension.
The \emph{number of parts that a ball may meet} is bounded solely in terms
of rank. At a fixed scale, the finite-union theorem takes a maximum over
finitely many diameter bounds; no uniform estimate over all possible
quotient monoids or all collision pairs is needed.
\end{remark}

\noindent\begin{minipage}{\textwidth}
\small
\textsc{School of Mathematical Sciences and School of Pre-university,}\\
\textsc{Dalian Minzu University}\\[0.4em]
\textit{Email address:}\enspace
\href{mailto:wangruijun@dlnu.edu.cn}{\texttt{wangruijun@dlnu.edu.cn}}
\end{minipage}
\end{document}